\newtheorem{algorithm}{Algorithm}
\newcommand{\bq}{\begin{equation}}
\newcommand{\eq}{\end{equation}}
\newcommand{\be}{{\bf e}}
\newcommand{\bu}{{\bf u}}
\newcommand{\bw}{{\bf w}}
\newcommand{\bx}{{\bf x}}
\newcommand{\bt}{{\bf t}}
\newcommand{\bv}{{\bf v}}
\newcommand{\bQ}{{\bf Q}}
\def\T{{\mathcal T}}
\def\E{{\mathcal E}}
\def\pT{{\partial T}}
\def\bbf{{\bf f}}
\def\bn{{\bf n}}
\def\bq{{\bf q}}
\def\l{{\langle}}
\def\r{{\rangle}}
\def\3bar{{|\hspace{-.02in}|\hspace{-.02in}|}}
\title{A simple finite element method for the Stokes equations}
\author{ Lin Mu \thanks{Computer Science and Mathematics Division
Oak Ridge National Laboratory, Oak Ridge, TN, 37831,USA
(mul1@ornl.gov). The first author's research is based upon work supported in part by the U.S. Department of Energy, Office of Science, Office of Advanced Scientific Computing Research, Applied Mathematics program under award number ERKJE45; and by the Laboratory Directed Research and Development program at the Oak Ridge National Laboratory, which is operated by UT-Battelle, LLC., for the U.S. Department of Energy under Contract DE-AC05-00OR22725} \and Xiu Ye\thanks{Department of
Mathematics, University of Arkansas at Little Rock, Little Rock, AR
72204 (xxye@ualr.edu). This research was supported in part by
National Science Foundation Grant DMS-1620016.}}
\begin{document}

\maketitle

\begin{abstract}
The goal of this paper is to introduce a simple finite element method 
to  solve the Stokes and the Navier-Stokes equations.  This method is in primal velocity-pressure formulation and is so simple such that both velocity and pressure are approximated by piecewise constant functions. Implementation issues as well as error analysis are investigated. A basis for a divergence free subspace of the velocity field is constructed so that the original saddle point problem can be reduced to a symmetric and positive definite system with much fewer unknowns. The numerical experiments indicate that the method is accurate and robust.
\end{abstract}

\begin{keywords}
finite element methods,  Stokes problem, weak Galerkin methods
\end{keywords}

\begin{AMS}
Primary, 65N15, 65N30, 76D07; Secondary, 35B45, 35J50
\end{AMS}

\pagestyle{myheadings}

\section{Introduction}

The Stokes problem is to
seek a pair of unknown functions $(\bu; p)$ satisfying
\begin{eqnarray}
-\nu\Delta\bu+\nabla p &=&\bbf\quad \mbox{in}\;\Omega,\label{moment}\\
\nabla\cdot\bu&=&0\quad\mbox{in}\;\Omega,\label{cont}\\
\bu &=& {\bf 0}\quad\mbox{on}\;\partial\Omega,\label{bc}
\end{eqnarray}
where $\nu$ denotes the fluid viscosity;  $\Delta$, $\nabla$, and
$\nabla\cdot$ denote the Laplacian, gradient, and divergence
operators, respectively; $\Omega \subset \mathbb{R}^d$ is the
region occupied by the fluid; $\bbf=\bbf(\bx)\in ([L^2(\Omega))]^d$
is the unit external volumetric force acting on the fluid at
$\bx\in\Omega$. For simplicity, we let $\nu=1$.

The weak formulation of the Stokes equations seeks $\bu\in [H_0^1(\Omega)]^d$ and $p\in L_0^2(\Omega)$ satisfying
\begin{eqnarray}
(\nabla\bu,\;\nabla\bv)-(\nabla\cdot\bv,\;p)&=&(\bbf,\;\bv),\quad \bv\in [H_0^1(\Omega)]^d\label{wf-m} \\
(\nabla\cdot\bu,\;q)&=&0,\quad\quad\quad q\in L_0^2(\Omega).\label{wf-c}
\end{eqnarray}

The linear Stokes equations are the limiting case of zero Reynolds number for the Navier-Stokes equations. The Stokes equations have attracted a substantial attention from researchers because of  its close relation with the Navier-Stokes equations. Numerical solutions of the Stokes equations have been investigated intensively and many different numerical schemes have been developed such as conforming/noconforming finite element methods, MAC method and finite volume methods. It is impossible to cite all the references. Therefore we just cite some classic ones \cite{cr,gr,gun,ht}.

In this paper, we present a finite element scheme for the Stokes equations  and its equivalent divergence free formulation. In this method, velocity is approximated by weak Galerkin element of degree $k=0$ and pressure is approximated by  piecewise polynomials of degree $k=0$. Weak Galerkin  refers to a general finite element technique for
partial differential equations in which differential operators are approximated by weak forms as distributions for generalized functions. The weak Galerkin finite element method first introduced in \cite{wy, wy-mixed} is a natural extension of the standard Galerkin finite element method for functions with discontinuity. 

One of the main difficulties in solving the the Stokes and the Navier-Stokes equations
is that the velocity and the pressure variables are coupled in a saddle point system. Many methods are developed to overcome
this difficulty. Divergence free finite element methods are such methods by approximating  velocity from weakly or exactly divergence free subspaces. As a consequence, pressure is eliminated from a saddle point system, along with the incompressibility
constraint resulting in a symmetric and positive definite system with a significantly
smaller number of unknowns.  For this simple finite element formulation, a divergence free basis is constructed explicitly. 

The rest of the paper is organized as follows.
The finite element formulation of this weak Galerkin method is introduced in Section \ref{section-wg}. Implementation issues of the method are discussed in Section \ref{section-implementation}.  In Section \ref{section-error}, we prove optimal order convergence rate of the method. Divergence free basis functions are constructed in Section \ref{section-divfree}. Using these basis functions, we can derive a divergence free weak Galerkin finite element formulation that will reduce a saddle point problem to a symmetric and positive definite system.  Numerical examples are presented in Section \ref{section-ne} to demonstrate the robustness and accuracy of the method.

\section{Finite Element Scheme}\label{section-wg}

Let ${\cal T}_h$ be a shape-regular triangulation of the domain
$\Omega$ with mesh size $h$.  Denote by ${\cal E}_h$
the set of all edges or  faces in ${\cal T}_h$, and let ${\cal
E}_h^0={\cal E}_h\backslash\partial\Omega$ be the set of all
interior edges or  faces.  let  ${\cal V}_h$ be the set of all interior vertices in $\T_h$. Define $N_E=card (\E_h^0)$, $N_V=card ({\cal V}_h)$ and $N_T=card (\T_h)$. For every element $T\in \T_h$, we
denote by $h_T$ its diameter and mesh size $h=\max_{T\in\T_h} h_T$
for ${\cal T}_h$.

The weak Galerkin methods create a new way to define a function $v$ that allows $v$ taking different forms in the interior and on the boundary of the element:
$$
v=
\left\{
\begin{array}{l}
  \displaystyle
  v_0,\quad {\rm in}\; T^0
  \\ [0.08in]
  \displaystyle
  v_b,\quad {\rm on}\;\partial T
 \end{array}
\right.
$$
where $T_0$ denotes the interior of $T$.
Since  weak function $v$ is formed by two parts $v_0$ and $v_b$, we write $v$ as $v=\{v_0,v_b\}$ in short without confusion. Let $P_k(T)$ denote the set consisting all the polynomials of degree less or equal to $k$.

Associated with $\T_h$, we define finite element spaces $V_h$ for velocity
\begin{equation}\label{vhspace}
V_h=\{\bv=\{\bv_0,\bv_b\}:\; \bv_0|_T\in [P_0(T)]^d,\ \bv_b|_e\in [P_0(e)]^d,\ e\in\pT,  T\in \T_h\}
\end{equation}
and $W_h$ for pressure
\begin{equation}\label{phspace}
W_h=\{q\in L^2_0(\Omega):\; q|_T\in P_0(T),\; T\in\T_h\},
\end{equation}
where $L^2_0(\Omega)$ is the subspace of $L_2(\Omega)$ consisting of functions with mean value zero.

We define $V_h^0$ a subspace of $V_h$ as
\begin{equation}\label{vh0space}
V^0_h=\{\bv=\{\bv_0,\bv_b\}\in V_h:\  \bv_b=0 \mbox{ on } \partial\Omega\}.
\end{equation}
We would like to emphasize that any function $v\in V_h$ has a single
value $v_b$ on each edge $e\in\E_h$.

Since the functions in $V_h$ are discontinuous polynomials, gradient operator $\nabla$ and divergence operator $\nabla\cdot$  in (\ref{wf-m})-(\ref{wf-c}) cannot be applied to them. Therefore we defined weak gradient and weak divergence for the functions in $V_h$. Let $RT_0(T)=[P_0(T)]^d+{\bf x}P_0(T)$ introduced in \cite{rt}. Let $\bn$ denote the unit outward normal.

For $\bv\in V_h$ and $T\in\T_h$, we define weak gradient
$\nabla_{w}\bv \in [RT_0(T)]^d$  as the unique polynomial
 satisfying the following
equation
\begin{equation}\label{dwg}
(\nabla_{w}\bv, \tau)_T = -(\bv_0,\nabla\cdot \tau)_T+ \langle \bv_b,
\tau\cdot\bn\rangle_{\partial T},\qquad \forall \tau\in [RT_0(T)]^d,
\end{equation}
and define weak divergence $\nabla_{w}\cdot\bv \in P_0(T)$ as the unique polynomial
 satisfying 
\begin{equation}\label{dwd}
(\nabla_{w}\cdot\bv, q)_T = -(\bv_0,\nabla q)_T+ \langle \bv_b,
q\bn\rangle_{\partial T},\qquad \forall q\in P_0(T).
\end{equation}

Define two bilinear forms as
\begin{eqnarray*}
a(\bv,\bw)=\sum_{T\in\T_h}(\nabla_w\bv,\nabla_w\bw)_T, \quad b(\bv,q)=\sum_{T\in\T_h}(\nabla_w\cdot\bv,q)_T.
\end{eqnarray*}

For each element $T\in \T_h$, denote by $Q_0$ and $\bQ_0$ the $L^2$ projections from $L^2(T)$ to $P_0(T)$ and
from $[L^2(T)]^d$ to $[P_0(T)]^d$ respectively. Denote by $\bQ_b$ the $L^2$ projection from
$[L^2(e)]^d$ to $[P_{0}(e)]^d$.

\medskip

\begin{algorithm}
A weak Galerkin method for the Stokes equations seeks
$\bu_h=(\bu_0,\bu_b)\in V_h^0$ and $p_h\in W_h$ satisfying  the following equation:
\begin{eqnarray}
a(\bu_h,\bv)-b(\bv,p_h)&=&(\bbf,\;\bv_0), \quad\forall \bv=\{\bv_0, \bv_b\}\in V_h^0\label{wg-m}\\
b(\bu_h,q)&=&0,\quad\quad\quad\forall q\in W_h.\label{wg-c}
\end{eqnarray}
\end{algorithm}

\section{Implementation of the method}\label{section-implementation}

The linear system associated with the algorithm (\ref{wg-m})-(\ref{wg-c}) is a saddle point problem with the form,
\begin{equation}\label{matrix}
\left(
  \begin{array}{cc}
    A & -B \\
    B^T&0
    \end{array}
\right)
\left(\begin{array}{c}U \\P \end{array}\right)=\left(\begin{array}{c}
F_1 \\ F_2
 \end{array}
 \right).
\end{equation}

The methodology of implementing this weak Galerkin method is the same as that for continuous
Galerkin finite element method except that computing  standard gradient $\nabla$ and divergence $\nabla\cdot$ are replaced by computing weak gradient $\nabla_w$ and weak divergence $\nabla_w\cdot$. For basis function $\Theta_l$, we will show that $\nabla_w\cdot\Theta_l$ and $\nabla_w\Theta_l$ can be calculated explicitly.

The procedures of implementing the method (\ref{wg-m})-(\ref{wg-c}) can be described as following steps. Here we let $d=2$ for simplicity.

\medskip

1. Find basis functions for $V_h$ and $W_h$. First we define two types of scalar piecewise constant basis functions $\phi_i$ associated with the interior of the triangle $T_i\in\T_h$  and $\psi_j$ associated with an edge $e_j\in \E_h$ respectively,
   \begin{equation}\label{phi}
\phi_i=\left\{
\begin{array}{l}
  1\quad\mbox{on} \;\; T_i^0,\\ [0.08in]0\quad\mbox{therwise},
\end{array}
\right.
\psi_j=\left\{
\begin{array}{l}
  1\quad\mbox{on} \;\; e_j.\\ [0.08in]0\quad\mbox{therwise},
\end{array}
\right.
\end{equation}
Please note that $\phi_i$ and $\psi_j$ are functions defined over the whole domain $\Omega$. Then we can define the vector basis functions for velocity as
\begin{equation}\label{b1}
\Phi_{i,1}=\left(\begin{array}{c}\phi_i \\0 \end{array}\right),\;\;\Phi_{i,2}=\left(\begin{array}{c}0 \\\phi_i \end{array}\right),\;\; i=1,\cdots,N_T,
\end{equation}
 and
\begin{equation}\label{b2}
\Psi_{i,1}=\left(\begin{array}{c}\psi_i \\0 \end{array}\right),\;\;\Psi_{i,2}=\left(\begin{array}{c}0 \\\psi_i \end{array}\right), \;\; i=1,\cdots,N_E.
\end{equation}
Let $n=N_T+N_E$. These $2n$ vector functions will form a basis for $V_h$,
\begin{eqnarray}
V_h&=&{\rm span} \{\Phi_{i,j}, i=1,\cdots, N_T,\;\Psi_{k,j}, k=1,\cdots,N_E, \;j=1,2\}\nonumber\\
&=&{\rm span} \{\Theta_1,\cdots,\Theta_{2n}\}.\label{vh}
\end{eqnarray}
Define $\overline{W}_h$ as
\[
\overline{W}_h={\rm span} \{\phi_1,\cdots,\phi_{N_T}\}.
\]
The pressure space is a subspace of $\overline{W}_h$,
\[
W_h=\{q\in \overline{W}_h, \int_\Omega qdx=0\}.
\]

\medskip

2. Compute weak gradient $\nabla_w$ and weak divergence $\nabla_w\cdot$ for the basis function $\Theta_l$ defined in (\ref{vh}). By the definition of $\Theta_l$,  to compute $\nabla_w\cdot\Theta_l$, we compute $\nabla_w\cdot\Phi_{i,j}$ and $\nabla_w\cdot\Psi_{i,j}$ instead. To find $\nabla_w\Theta_l$, we just need to compute $\nabla_w\phi_i$ and $\nabla_w\psi_i$.

\medskip

\begin{itemize}
\item Computing $\nabla_w\cdot\Phi_{i,j}$.\\
Using (\ref{dwd}), we have $\nabla_w\cdot\Phi_{i,j}|_T=0$ for all $T\in\T_h$.\\

\item Computing $\nabla_w\cdot\Psi_{i,j}$.\\
 Assume that $i^{th}$ edge $e_i$ is  on $\pT$ and $\Psi_{i,j}$ is defined in (\ref{b2}).  Then 
\begin{equation}\label{cwd}
\nabla_w\cdot\Psi_{i,j}|_T=\frac1{|T|}\int_{e_i}\Psi_{i,j,b}\cdot\bn ds,
\end{equation}
where $|T|$ is the area of $T$ and $\Psi_{i,j}=\{\Psi_{i,j,0},\Psi_{i,j,b}\}$.
Note that $\nabla_w\cdot\Psi_{i,j}$ is only nonzero  on two triangles that share $e_i$.\\

\item Computing $\nabla_w\phi_i$.\\
  Let $T$ be the $i^{th}$ triangle in $\T_h$ and $\phi_i$ be defined as in (\ref{phi}). Then $\nabla_w\phi_i$ is only nonzero on $T$ and can be calculated by
 \begin{eqnarray*}
\nabla_w\phi_i|_T&=&-C_T(\bx-\bx_T),
\end{eqnarray*}    
where $\bx_T$ is the centroid  of $T$ and $C_T= \frac{2|T|}{\|\bx-\bx_T\|_T^2}$.\\

\item Computing $\nabla_w\psi_i$.\\
 Assume that $i^{th}$ edge $e_i$ is  on $\pT$ and $\psi_{i}$ is defined in (\ref{phi}). Then
\begin{eqnarray*}
\nabla_w\psi_i|_T&=&\frac{C_T}{3}(\bx-\bx_T)+\frac{|e_i|}{|T|}\bn,
\end{eqnarray*}
Note that $\nabla_w\psi_i$ is only nonzero  on  two triangles that share $e_i$.
\end{itemize}

\medskip

3. Form the stiffness matrix (\ref{matrix}) with
\[
A=(a_{ij})=(a(\Theta_i,\Theta_j)),\quad B=(b_{ij})=(b(\Theta_i,\phi_j)).
\]
Note that
$$
a(\Theta_i,\Theta_j)=\sum_{T\in\T_h}(\nabla_w\Theta_i,\nabla_w\Theta_j)_T,\;\; b(\Theta_i,\phi_j)=\sum_{T\in\T_h}(\nabla_w\cdot\Theta_i,\phi_j)_T.
$$

\section{Error estimate}\label{section-error}
Denote by $\pi_h$ a $L^2$ projection from  $[L^2(T)]^{d\times d} $ to $[RT_0(T)]^d$.
We also define a projection $\Pi_h$ such that
$\Pi_h\bq\in [H({\rm div},\Omega)]^d$, and on each $T\in {\cal T}_h$,
one has $\Pi_h\bq \in [RT_0(T)]^d$ and the following equation satisfied:
$$
(\nabla\cdot\bq,\;\bv_0)_T=(\nabla\cdot\Pi_h\bq,\;\bv_0)_T, \qquad
\forall \bv_0\in [P_0(T)]^d.
$$
For any $\tau\in  [H({\rm div},\Omega)]^d$,  we have (see \cite{bf})
\begin{equation}\label{4.200}
\sum_{T\in {\cal T}_h}(-\nabla\cdot\tau, \;\bv_0)_T=\sum_{T\in {\cal T}_h}(\Pi_h\tau, \;\nabla_w\bv)_T,\quad\forall \bv=\{\bv_0,\bv_b\}\in V_h.
\end{equation}
The following two identities can be verified easily and also can be found in \cite{wy, wy-mixed}.
\begin{eqnarray}
\nabla_w \bQ_h \bu &=& \pi_h (\nabla \bu),\label{4.88}\\
\nabla_w\cdot \bQ_h \bu &=& Q_0 (\nabla \cdot\bu).\label{4.99}
\end{eqnarray}

We introduce two semi-norms $\3bar \bv\3bar$ and $\|\cdot\|_{1,h}$ as follows:
\begin{eqnarray}
\3bar \bv\3bar^2 &=& a(\bv,\bv), \label{norm2}\\
\|\bv\|_{1,h}^2&=&\sum_{T\in \T_h}\left(\|\nabla\bv_0\|_T^2+h_T^{-1}\|\bv_0-\bv_b\|_{\pT}^2\right).\label{norm3}
\end{eqnarray}

The following norm equivalences is proved in \cite{mwwy} that there exist two constants $C_1$ and $C_2$ independent of $h$ satisfying
\begin{equation}\label{norm-e}
C_1\|\bv\|_{1,h}\le \3bar\bv\3bar\le C_2 \|\bv\|_{1,h}.
\end{equation}

\begin{lemma}
The semi-nome $\3bar\cdot\3bar$ defined in (\ref{norm2}) is a norm in $V_h^0$.
\end{lemma}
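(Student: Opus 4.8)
The plan is to show that $\3bar\bv\3bar = 0$ for $\bv \in V_h^0$ forces $\bv = 0$, i.e. $\bv_0 = 0$ on every $T$ and $\bv_b = 0$ on every $e \in \E_h$. Since $\3bar\cdot\3bar$ is already a semi-norm by construction (it comes from the symmetric bilinear form $a(\cdot,\cdot)$, which is clearly nonnegative), the only thing to verify is positive-definiteness on $V_h^0$. The cleanest route is to invoke the norm equivalence \eqref{norm-e}: it suffices to prove that $\|\bv\|_{1,h} = 0$ implies $\bv = 0$ in $V_h^0$.

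Suppose $\|\bv\|_{1,h} = 0$. From the definition \eqref{norm3}, this gives $\nabla \bv_0 = 0$ on each $T \in \T_h$ and $\bv_0 = \bv_b$ on $\partial T$ for each $T$. The first condition means $\bv_0$ is constant on each element (which it already is, being in $[P_0(T)]^d$, so this part is automatic and carries no information); the real content is $\bv_0|_{\partial T} = \bv_b|_{\partial T}$ for every $T$. Since $\bv_0$ is a single constant vector on $T$ and $\bv_b$ is a single constant vector on each edge $e \subset \partial T$, this says: the interior constant of $T$ equals the edge constant on each of its edges. Now take any interior edge $e \in \E_h^0$ shared by two elements $T_1$ and $T_2$; since $\bv$ has a single value $\bv_b$ on $e$, we get $\bv_0|_{T_1} = \bv_b|_e = \bv_0|_{T_2}$. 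By connectedness of $\Omega$ and the fact that $\T_h$ is a conforming triangulation, walking from element to element shows $\bv_0$ equals one global constant vector $\bc$ throughout $\Omega$, and $\bv_b \equiv \bc$ on all of $\E_h$. Finally, because $\bv \in V_h^0$ we have $\bv_b = 0$ on $\partial\Omega$, and $\partial\Omega$ contains at least one edge, so $\bc = 0$. Hence $\bv_0 \equiv 0$ and $\bv_b \equiv 0$, i.e. $\bv = 0$.

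The main (and essentially only) subtlety is the connectedness/propagation argument: one must use that $\Omega$ is connected and $\T_h$ is a conforming mesh so that the "equal across shared edge" relation links every element to the boundary. This is routine but is where the boundary condition in the definition of $V_h^0$ is genuinely needed — without it, the nonzero global constants $\bv = \{\bc, \bc\}$ would lie in the kernel, so $\3bar\cdot\3bar$ is only a semi-norm on $V_h$, not on $V_h^0$. An alternative to using \eqref{norm-e} is to argue directly from $a(\bv,\bv) = 0$: this forces $\nabla_w \bv|_T = 0$ on each $T$, and then testing \eqref{dwg} with suitable $\tau \in [RT_0(T)]^d$ (first $\tau$ constant to recover $\langle \bv_b, \bn\rangle_{\partial T}$-type relations, then $\tau = \bx - \bx_T$ scaled) recovers $\bv_b = \bv_0$ on $\partial T$ and that $\bv_0$ is determined by its boundary data; this reproduces the same pointwise conclusions and then the same connectedness argument finishes it. I would present the proof via \eqref{norm-e} since it is shorter and the needed equivalence is already quoted.
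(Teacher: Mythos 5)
Your proof is correct and follows essentially the same route as the paper: reduce via the norm equivalence \eqref{norm-e} to showing $\|\cdot\|_{1,h}$ is a norm on $V_h^0$, then use $\bv_0=\bv_b$ on edges together with the single-valuedness of $\bv_b$ and the boundary condition to force $\bv=0$. The only difference is that you spell out the element-to-element propagation/connectedness step that the paper compresses into ``$v_0$ is continuous,'' which is a welcome clarification but not a different argument.
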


\medskip

\begin{proof}
We only need to prove $v=0$ if $\3bar v\3bar=0$ for all $v\in V_h^0$. Let $v\in V_h^0$ and $\|v\|_{1,h}=0$. Then we have $\nabla v_0=0$ on each $T\in\T_h$ , $v_0=v_b$ on $e\in\E_h^0$ and $v_b=0$ for $e$ on $\partial\Omega$. $\nabla v_0=0$ on $T$ implies that $v_0$ is a constant on each $T$. $v_0=v_b$ on $e$ means that $v_0$ is continuous. With $v_0=v_b=0$ on $\partial\Omega$, we conclude $v=0$ and prove that $\|\cdot\|_{1,h}$ is a norm in $V_h$. Combining it with (\ref{norm-e}), we have proved that $\3bar\cdot\3bar$ is a norm in $V_h^0$.
\end{proof}

\medskip

Define two linear functionals on $V_h^0$  by
\begin{eqnarray*}
\ell_{\bu}(\bv)=\sum_{T\in\T_h}(\Pi_h\nabla \bu-\pi_h\nabla\bu, \nabla_w\bv)_T,\;\;\;
\ell_p(\bv)=\sum_{T\in\T_h}\l \bv_0-\bv_b,\;(p-Q_0p)\bn\r_\pT.
\end{eqnarray*}

\begin{lemma}\label{e-e}
Let $\be_h=\bQ_h\bu-\bu_h=\{\be_0,\;\be_b\}=\{\bQ_0\bu-\bu_0,\;\bQ_b\bu-\bu_b\}$ and $\varepsilon_h=Q_0p-p_h$.
Then, the following equations hold
true
\begin{eqnarray}
a(\be_h,\bv)-b(\bv,\varepsilon_h)&=&-\ell_{\bu}(\bv)-\ell_p(\bv),\;\forall\bv\in V_h^0,\label{ee-m}\\
b(\be_h,q)&=&0,\quad\qquad\qquad\forall q\in W_h.\label{ee-c}
\end{eqnarray}
\end{lemma}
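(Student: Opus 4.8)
The plan is to derive the error equations directly from the exact weak formulation, the discrete scheme, and the commuting properties of the projections. I would start from the momentum equation of the continuous problem, test it against $\bv_0$ for $\bv=\{\bv_0,\bv_b\}\in V_h^0$, and rewrite the two terms $(\nabla\bu,\nabla\bv_0)$-type and $(\nabla\cdot\bv_0,p)$-type in ``weak'' form. Concretely, write $(\nabla\cdot(\nabla\bu),\bv_0)$ by integration by parts on each $T$ using $-\nu\Delta\bu+\nabla p=\bbf$, so that $\sum_T(-\nabla\cdot(\nabla\bu-pI),\bv_0)_T=(\bbf,\bv_0)$. Then apply the key identity \eqref{4.200} with $\tau=\nabla\bu-pI$ (componentwise, since $\nabla\bu-pI\in[H(\mathrm{div},\Omega)]^d$ thanks to the regularity of $(\bu,p)$) to get $\sum_T(\Pi_h(\nabla\bu-pI),\nabla_w\bv)_T=(\bbf,\bv_0)$.

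Next I would insert the projections $\pi_h$ and $Q_0$ to connect with the discrete operators. Split $\Pi_h\nabla\bu=\pi_h\nabla\bu+(\Pi_h\nabla\bu-\pi_h\nabla\bu)$; by \eqref{4.88}, $\pi_h\nabla\bu=\nabla_w\bQ_h\bu$, so $\sum_T(\pi_h\nabla\bu,\nabla_w\bv)_T=a(\bQ_h\bu,\bv)$, and the remainder contributes exactly $\ell_{\bu}(\bv)$. For the pressure term, note $\Pi_h(pI)$ acting through $\nabla_w\bv$ can be traced back: using the definition of $\nabla_w\cdot$ in \eqref{dwd} and the definition of $\Pi_h$, one shows $\sum_T(\Pi_h(pI),\nabla_w\bv)_T=-b(\bv,Q_0p)+\ell_p(\bv)$, where the $\ell_p$ piece is the jump term $\sum_T\langle\bv_0-\bv_b,(p-Q_0p)\bn\rangle_{\pT}$ coming from replacing $p$ by $Q_0p$ on element boundaries and from the fact that $\bv_b$ is single-valued so interior-edge contributions of $Q_0p$ telescope. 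Combining, the exact solution satisfies $a(\bQ_h\bu,\bv)-b(\bv,Q_0p)=(\bbf,\bv_0)+\ell_{\bu}(\bv)+\ell_p(\bv)$ for all $\bv\in V_h^0$. Subtracting the discrete momentum equation \eqref{wg-m} then yields \eqref{ee-m}.

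For \eqref{ee-c}, the argument is shorter: by \eqref{4.99} we have $\nabla_w\cdot\bQ_h\bu=Q_0(\nabla\cdot\bu)=Q_0(0)=0$ on each $T$ since $\nabla\cdot\bu=0$, hence $b(\bQ_h\bu,q)=0$ for all $q\in W_h$; the discrete mass equation \eqref{wg-c} gives $b(\bu_h,q)=0$; subtracting gives $b(\be_h,q)=0$.

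I expect the main obstacle to be the careful bookkeeping in the pressure term, i.e. verifying that $\sum_T(\Pi_h(pI),\nabla_w\bv)_T$ decomposes cleanly as $-b(\bv,Q_0p)+\ell_p(\bv)$. This requires using the defining property of $\Pi_h$ (it preserves $\nabla\cdot$ tested against $[P_0(T)]^d$), the definition \eqref{dwd} of weak divergence, and then handling the boundary terms: on interior edges the two $Q_0p$ values from adjacent elements need not cancel, but they reorganize against the single-valued $\bv_b$, while on $\partial\Omega$ the condition $\bv_b=0$ kills the boundary contribution. Getting the signs and the exact form $\langle\bv_0-\bv_b,(p-Q_0p)\bn\rangle_{\pT}$ right is the delicate part; everything else is a direct application of \eqref{4.200}, \eqref{4.88}, \eqref{4.99} and the scheme.
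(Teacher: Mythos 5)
Your proposal is correct and follows essentially the same route as the paper's proof: test the momentum equation against $\bv_0$, convert the viscous term via (\ref{4.200}) and (\ref{4.88}) into $a(\bQ_h\bu,\bv)+\ell_{\bu}(\bv)$, produce $b(\bv,Q_0p)$ and the jump functional $\ell_p(\bv)$ from the pressure term by element-wise integration by parts together with the single-valuedness of $\bv_b$ (and $\bv_b=0$ on $\partial\Omega$), use (\ref{4.99}) with $\nabla\cdot\bu=0$ for the mass equation, and subtract the scheme (\ref{wg-m})--(\ref{wg-c}); routing the pressure through $\Pi_h(pI)$ rather than integrating $(\nabla p,\bv_0)$ by parts directly is only a cosmetic difference, since verifying the resulting decomposition requires the same computation. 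The one thing to fix is a sign slip: your two displayed claims are mutually inconsistent, and the correct versions are $\sum_{T\in\T_h}(\Pi_h(pI),\nabla_w\bv)_T=b(\bv,Q_0p)-\ell_p(\bv)$ and $a(\bQ_h\bu,\bv)-b(\bv,Q_0p)=(\bbf,\bv_0)-\ell_{\bu}(\bv)-\ell_p(\bv)$, without which subtracting (\ref{wg-m}) would give $+\ell_{\bu}+\ell_p$ instead of the $-\ell_{\bu}-\ell_p$ appearing in (\ref{ee-m}).
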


\begin{proof}
Testing (\ref{moment}) by $\bv=\{\bv_0,\bv_b\}\in V_h^0$ gives
\begin{equation}\label{mmm1}
(-\nabla\cdot(\nabla\bu), \;\bv_0)+(\nabla p,\;\bv_0)=(\bbf, \bv_0).
\end{equation}
Equations (\ref{4.200}) and (\ref{4.88})  imply
\begin{eqnarray*}
\sum_{T\in\T_h}(-\nabla\cdot(\nabla\bu), \;\bv_0)_T&=&\sum_{T\in\T_h}((\Pi_h\nabla\bu, \nabla_w\bv)_T=\sum_{T\in\T_h}((\Pi_h\nabla \bu-\pi_h\nabla\bu, \nabla_w\bv)_T+(\nabla_w\bQ_h\bu, \nabla_w\bv)_T).
\end{eqnarray*}
It follows from (\ref{dwd}), the integration by parts and the fact $\sum_{K\in\T_h}\langle\bv_b,\; p\;\bn\rangle_{\partial K}=0$ that for $v\in V_h^0$
\begin{eqnarray*}
(\nabla p,\;\bv_0)&&=\sum_{T\in\T_h}(-(p,\; \nabla\cdot\bv_0)_T +\l p\bn,\; \bv_0\r_T)\\
&&=\sum_{T\in\T_h}(-(Q_0p,\; \nabla\cdot\bv_0)_T +\l p\bn,\; \bv_0-\bv_b\r_T)\\
&&=\sum_{T\in\T_h}((\bv_0,\; \nabla Q_0p)_T -\l Q_0p\bn,\; \bv_0\r_T+\l p\bn,\; \bv_0-\bv_b\r_T)\\
&&=-b(\bv, Q_0p)+\sum_{T\in\T_h}\l \bv_0-\bv_b,\;(p-Q_0p)\bn\r_\pT.
\end{eqnarray*}
Combining two equations above with (\ref{mmm1}), we have
\begin{eqnarray}
a(\bQ_h\bu,\bv)-b(\bv,Q_0p)=(\bbf, \bv_0)-\ell_{\bu}(\bv)-\ell_p(\bv).\label{t-m}
\end{eqnarray}
Using (\ref{4.99}) and (\ref{cont}), we have that for any $q\in W_h$
\begin{eqnarray}
b(\bQ_h\bu,q)&=&\sum_{T\in\T_h}(\nabla_w\cdot\bQ_h\bu, q)_T=\sum_{T\in\T_h}(Q_0(\nabla\cdot\bu), q)_T\nonumber\\
&=&\sum_{T\in\T_h}(\nabla\cdot\bu, q)_T=b(\bu,q)=0,\label{t-c}
\end{eqnarray}
The differences of (\ref{wg-m})-(\ref{wg-c}) and (\ref{t-m})-(\ref{t-c}) yield (\ref{ee-m})-(\ref{ee-c}). The proof is completed.
\end{proof}

\medskip

\begin{lemma}
For any $\rho\in W_h$, then there exists a constant $C$ independent of $h$ such that
\begin{equation}\label{inf-sup}
\sup_{\bv\in V_h^0}\frac{(\nabla_w\cdot\bv,\;\rho)}{\3bar\bv\3bar}\ge C\|\rho\|.
\end{equation}
\end{lemma}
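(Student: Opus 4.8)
The plan is to prove the discrete inf-sup (LBB) condition by reducing it to the continuous inf-sup condition for the Stokes problem, using the projection operator $\Pi_h$ as the bridge. Recall that for any $\rho\in L_0^2(\Omega)$ the continuous inf-sup condition guarantees the existence of $\bw\in [H_0^1(\Omega)]^d$ with $\nabla\cdot\bw=\rho$ and $\|\bw\|_1\le C\|\rho\|$. The natural candidate for the test function in the discrete problem is $\bv=\bQ_h\bw=\{\bQ_0\bw,\bQ_b\bw\}\in V_h^0$ (it lies in $V_h^0$ because $\bw$ vanishes on $\partial\Omega$, so $\bQ_b\bw=0$ there).

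First I would use the commuting identity (\ref{4.99}), namely $\nabla_w\cdot\bQ_h\bw=Q_0(\nabla\cdot\bw)$, to compute, for $\rho\in W_h\subset P_0$ piecewise,
\begin{eqnarray*}
(\nabla_w\cdot\bQ_h\bw,\;\rho)=\sum_{T\in\T_h}(Q_0(\nabla\cdot\bw),\;\rho)_T=\sum_{T\in\T_h}(\nabla\cdot\bw,\;\rho)_T=(\rho,\rho)=\|\rho\|^2,
\end{eqnarray*}
where the second equality uses that $\rho$ is constant on each $T$ so that the $L^2$-projection $Q_0$ can be removed. Next I would bound $\3bar\bQ_h\bw\3bar$ from above by a constant times $\|\bw\|_1$: by the norm equivalence (\ref{norm-e}) it suffices to bound $\|\bQ_h\bw\|_{1,h}$, and this is a standard approximation/stability estimate for $L^2$ projections on shape-regular meshes, $\|\nabla(\bQ_0\bw)\|_T\le C\|\nabla\bw\|_T$ together with the trace/approximation bound $h_T^{-1/2}\|\bQ_0\bw-\bQ_b\bw\|_{\pT}\le C\|\nabla\bw\|_T$ (the latter because $\bQ_b\bw$ is also the edge projection of $\bw$, so $\bQ_0\bw-\bQ_b\bw=\bQ_b(\bQ_0\bw-\bw)$ and one applies a trace inequality plus $\|\bQ_0\bw-\bw\|_T\le Ch_T\|\nabla\bw\|_T$). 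Combining, $\3bar\bQ_h\bw\3bar\le C\|\bw\|_1\le C\|\rho\|$.

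Putting the two pieces together,
\begin{eqnarray*}
\sup_{\bv\in V_h^0}\frac{(\nabla_w\cdot\bv,\;\rho)}{\3bar\bv\3bar}\ge \frac{(\nabla_w\cdot\bQ_h\bw,\;\rho)}{\3bar\bQ_h\bw\3bar}\ge\frac{\|\rho\|^2}{C\|\rho\|}=\frac1C\|\rho\|,
\end{eqnarray*}
which is the claimed estimate with a constant independent of $h$.

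The main obstacle I anticipate is the upper bound $\3bar\bQ_h\bw\3bar\le C\|\bw\|_1$, i.e. the $H^1$-stability of the projection $\bQ_h$ in the weak Galerkin norm; this is where shape-regularity of $\T_h$ and the correct handling of the boundary term $h_T^{-1}\|\bv_0-\bv_b\|_{\pT}^2$ enter, and it must be done carefully so that the constant does not degenerate in $h$. Everything else — the use of (\ref{4.99}), removing $Q_0$ against piecewise-constant $\rho$, and invoking the continuous inf-sup — is routine. One should also note $\bw\in[H^1_0(\Omega)]^d$ need only have $H^1$ regularity, so no elliptic regularity of $\Omega$ is required; only the standard Bogovskii-type right inverse of the divergence is used.
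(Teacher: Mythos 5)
Your proof is correct and takes essentially the same route as the paper: choose the continuous inf-sup test function $\tilde\bv\in[H_0^1(\Omega)]^d$, set $\bv=\bQ_h\tilde\bv\in V_h^0$, and use the commuting identity (\ref{4.99}) plus the fact that $\rho$ is piecewise constant to obtain $(\nabla_w\cdot\bv,\rho)=(\nabla\cdot\tilde\bv,\rho)$. The one place you diverge is the stability bound you flag as the main obstacle: the paper dispatches it in a single line via the other commuting identity (\ref{4.88}), $\3bar\bQ_h\tilde\bv\3bar=\|\pi_h\nabla\tilde\bv\|\le\|\nabla\tilde\bv\|$, using only the $L^2$-contractivity of the projection $\pi_h$, so the norm equivalence (\ref{norm-e}) and the trace-inequality estimate of $h_T^{-1/2}\|\bQ_0\bw-\bQ_b\bw\|_{\pT}$ that you sketch, while valid, are not needed.
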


\begin{proof}
For a given $\rho\in W_h\subset L_0^2(\Omega)$, it is well known that there exists  $\tilde\bv\in [H_0^1(\Omega)]^d$ such that
\begin{equation}\label{c-inf-sup}
\frac{(\nabla\cdot\tilde\bv,\rho)}{\|\nabla\tilde\bv\|}\ge C\|\rho\|.
\end{equation}
Let $\bv=\bQ_h\tilde{\bv}$.
(\ref{4.88}) implies
\begin{equation}\label{m2}
\3bar\bv\3bar=\|\nabla_w\bv\|=\|\nabla_w(\bQ_h\tilde{\bv})|=\|\pi_h\nabla \tilde{\bv}\|\le \|\nabla\tilde\bv\|.
\end{equation}
It follows from (\ref{4.99}) and the definition of $\pi_h$
\begin{eqnarray*}
(\nabla_w\cdot\bv,\;\rho)&=&(\nabla_w\cdot \bQ_h\tilde\bv,\;\rho)=(Q_0(\nabla\cdot\tilde\bv),\;\rho)=(\nabla\cdot\tilde\bv,\;\rho).
\end{eqnarray*}
Using the equation above, (\ref{c-inf-sup}) and (\ref{m2}), we have
\begin{eqnarray*}
\frac{(\nabla_w\bv,\rho)} {\3bar\bv\3bar} &\ge & \frac{(\nabla\cdot\tilde\bv,\rho)}{\|\nabla\tilde\bv\|}\ge C\|\rho\|.
\end{eqnarray*}
We proved the lemma.
\end{proof}

For any function $\varphi\in H^1(T)$, the following trace
inequality holds true 
\begin{equation}\label{trace}
\|\varphi\|_{e}^2 \leq C \left( h_T^{-1} \|\varphi\|_T^2 + h_T
\|\nabla \varphi\|_{T}^2\right).
\end{equation}

\begin{theorem}\label{h1-bd}
Let $(\bu, p)\in  [H_0^1(\Omega)\cap H^{2}(\Omega)]^d\times L_0^2(\Omega)\cap H^1(\Omega)$  and $(\bu_h,p_h)\in V_h\times W_h$ be the solutions of (\ref{moment})-(\ref{bc}) and (\ref{wg-m})-(\ref{wg-c}) respectively. Then
\begin{eqnarray}
\|\nabla_w (\bQ_h\bu- \bu_h)\|+\|Q_0p-p_h\|&\le& Ch(\|\bu\|_{2}+\|p\|_1),\label{error1}\\
\|\bQ_0\bu-\bu_0\|&\le& Ch^2(\|\bu\|_{2}+\|p\|_1).\label{error2}
\end{eqnarray}
\end{theorem}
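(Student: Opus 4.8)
The plan is to derive the two estimates from the error equations in Lemma \ref{e-e} together with the inf--sup condition, in the standard way for mixed methods, and then to obtain \eqref{error2} by a duality (Aubin--Nitsche) argument.

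\textbf{Step 1: Bound the two linear functionals.} First I would estimate $\ell_{\bu}(\bv)$ and $\ell_p(\bv)$ for $\bv\in V_h^0$. For $\ell_{\bu}$, the integrand is $(\Pi_h\nabla\bu-\pi_h\nabla\bu,\nabla_w\bv)_T$; since $\pi_h$ is the $L^2$ projection onto $[RT_0(T)]^d$ and $\Pi_h\nabla\bu$ also lies in $[RT_0(T)]^d$, one has $\|\Pi_h\nabla\bu-\pi_h\nabla\bu\|_T\le \|\nabla\bu-\Pi_h\nabla\bu\|_T+\|\nabla\bu-\pi_h\nabla\bu\|_T\le Ch_T\|\bu\|_{2,T}$ by the approximation properties of $\Pi_h$ and $\pi_h$ on $RT_0$. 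Then Cauchy--Schwarz over elements gives $|\ell_{\bu}(\bv)|\le Ch\|\bu\|_2\,\3bar\bv\3bar$. For $\ell_p$, I would write each term as $\l\bv_0-\bv_b,(p-Q_0p)\bn\r_{\partial T}$, apply Cauchy--Schwarz on $\partial T$, use the trace inequality \eqref{trace} on $p-Q_0p$ (so that $\|p-Q_0p\|_{\partial T}\le Ch_T^{1/2}\|p\|_{1,T}$), and use $h_T^{-1/2}\|\bv_0-\bv_b\|_{\partial T}$ together with the norm equivalence \eqref{norm-e}; this yields $|\ell_p(\bv)|\le Ch\|p\|_1\,\3bar\bv\3bar$. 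Combining, $|\ell_{\bu}(\bv)+\ell_p(\bv)|\le Ch(\|\bu\|_2+\|p\|_1)\3bar\bv\3bar$.

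\textbf{Step 2: Energy estimate for velocity and pressure.} Take $\bv=\be_h$ in \eqref{ee-m}; by \eqref{ee-c} with $q=\varepsilon_h\in W_h$ the term $b(\be_h,\varepsilon_h)$ vanishes, so $\3bar\be_h\3bar^2=-\ell_{\bu}(\be_h)-\ell_p(\be_h)\le Ch(\|\bu\|_2+\|p\|_1)\3bar\be_h\3bar$, which gives $\3bar\be_h\3bar=\|\nabla_w\be_h\|\le Ch(\|\bu\|_2+\|p\|_1)$. For the pressure, use the inf--sup inequality \eqref{inf-sup} applied to $\rho=\varepsilon_h$: for any $\bv\in V_h^0$, \eqref{ee-m} gives $b(\bv,\varepsilon_h)=a(\be_h,\bv)+\ell_{\bu}(\bv)+\ell_p(\bv)$, so $(\nabla_w\cdot\bv,\varepsilon_h)\le (\3bar\be_h\3bar+Ch(\|\bu\|_2+\|p\|_1))\3bar\bv\3bar$; dividing by $\3bar\bv\3bar$ and taking the supremum yields $C\|\varepsilon_h\|\le \3bar\be_h\3bar+Ch(\|\bu\|_2+\|p\|_1)\le Ch(\|\bu\|_2+\|p\|_1)$. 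This proves \eqref{error1}.

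\textbf{Step 3: Duality argument for the $L^2$ velocity estimate.} For \eqref{error2} I would introduce the auxiliary Stokes problem $-\Delta\bpsi+\nabla\xi=\be_0$ (understood as $\bQ_0\bu-\bu_0$ extended appropriately), $\nabla\cdot\bpsi=0$ in $\Omega$, $\bpsi=0$ on $\partial\Omega$, with $H^2\times H^1$ regularity $\|\bpsi\|_2+\|\xi\|_1\le C\|\be_0\|$. Testing this dual problem against $\be_h$ and running the same manipulations as in the proof of Lemma \ref{e-e} (with the roles of the exact and dual solutions interchanged) produces an identity of the form $\|\be_0\|^2 = a(\be_h,\bQ_h\bpsi)-b(\bQ_h\bpsi,\varepsilon_h) + (\text{consistency terms in }\bpsi,\xi) + b(\be_h,Q_0\xi)$; the term $b(\be_h,Q_0\xi)$ vanishes by \eqref{ee-c}, and $a(\be_h,\bQ_h\bpsi)-b(\bQ_h\bpsi,\varepsilon_h)=-\ell_{\bu}(\bQ_h\bpsi)-\ell_p(\bQ_h\bpsi)$ by \eqref{ee-m}. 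Each remaining term is then bounded by $Ch^2(\|\bu\|_2+\|p\|_1)\|\be_0\|$: the functionals $\ell_{\bu},\ell_p$ applied to $\bQ_h\bpsi$ each contribute one power of $h$ from the $\bu,p$ data and one more from $\|\nabla_w\bQ_h\bpsi\|\le\|\nabla\bpsi\|\le C\|\be_0\|$ being paired against quantities that are themselves $O(h)$ after using $\|\bpsi\|_2$ — more precisely one uses that $\Pi_h\nabla\bpsi-\pi_h\nabla\bpsi$ and $\bpsi_0-\bpsi_b$ are $O(h)\|\bpsi\|_2$ — and similarly for the consistency terms coming from the dual equation tested against $\be_h$, which pair $O(h\|\bpsi\|_2+h\|\xi\|_1)$ quantities against $\3bar\be_h\3bar=O(h(\|\bu\|_2+\|p\|_1))$. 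Dividing by $\|\be_0\|$ gives \eqref{error2}.

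\textbf{Main obstacle.} Steps 1 and 2 are routine. The delicate part is Step 3: one must set up the dual testing carefully so that every term that is not obviously zero pairs a quantity carrying one factor of $h$ (from projection error on the dual data $\bpsi,\xi$) against another carrying a second factor of $h$ (from the primal consistency error $\3bar\be_h\3bar$ or from the projection errors in $\ell_{\bu},\ell_p$). Getting the bookkeeping right — in particular verifying that no term survives with only a single power of $h$, and correctly invoking \eqref{4.88}, \eqref{4.99}, \eqref{4.200}, the trace inequality \eqref{trace}, and the $H^2$-regularity of the dual Stokes problem — is where the real work lies.
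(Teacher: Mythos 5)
Your Steps 1 and 2 reproduce the paper's proof of (\ref{error1}) essentially verbatim: the same bounds on $\ell_{\bu}$ and $\ell_p$ via the projection approximation properties, the trace inequality (\ref{trace}) and the norm equivalence (\ref{norm-e}), followed by the energy argument with $\bv=\be_h$, $q=\varepsilon_h$ and the inf--sup condition (\ref{inf-sup}) for the pressure. For (\ref{error2}) the paper simply states that the estimate follows from ``the standard duality argument'' and omits the proof, so your Step 3 supplies exactly the argument the authors invoke (modulo the convexity/regularity hypothesis on $\Omega$ needed for the $H^2\times H^1$ dual regularity, which neither you nor the paper makes explicit); the overall approach is the same as the paper's.
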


\begin{proof}
Letting $\bv=\be_h$ and $q=\varepsilon_h$ in (\ref{ee-m})-(\ref{ee-c}) and adding the two equations yield
\begin{equation}\label{eee1}
\3bar\be_h\3bar^2=a(\be_h,\be_h)=|\ell_{\bu}(\be_h)+\ell_{p}(\be_h)|.
\end{equation}
It follows from the definitions of $\Pi_h$ and $\pi_h$ that
\begin{eqnarray}
|\ell_{\bu}(\be_h)|&=&|\sum_{T\in\T_h}(\Pi_h\nabla \bu-\pi_h\nabla\bu, \nabla_w\be_h)_T|\nonumber\\
&\le& \sum_{T\in\T_h}\|\Pi_h\nabla \bu-\pi_h\nabla\bu\|_T\|\nabla_w\be_h\|_T\|\le Ch\|\bu\|_2\3bar\be_h\3bar.\label{eee2}
\end{eqnarray}
Using the trace inequality (\ref{trace}), the definition of $Q_0$ and norm equivalence (\ref{norm-e}), we have
\begin{eqnarray}
|\ell_{p}(\be_h)|&=&|\sum_{T\in\T_h}\l \be_0-\be_b,\;(p-Q_0p)\bn\r_\pT|\nonumber\\
&\le&\sum_{T\in\T_h}\|p-Q_0p\|_{\pT}\|\be_0-\be_b\|\nonumber\\
&\le&(\sum_{T\in\T_h}h\|p-Q_0p\|_{\pT}^2)^{1/2}(\sum_{T\in\T_h}h^{-1}\|\be_0-\be_b\|_\pT^2)^{1/2}\nonumber\\
&\le& Ch\|p\|_1\3bar\be_h\3bar.\label{eee3}
\end{eqnarray}
Combining the estimates (\ref{eee2})-(\ref{eee3}) and (\ref{eee1}) gives
\begin{equation}\label{eee4}
\3bar\be_h\3bar\le Ch(\|\bu\|_2+\|p\|_1).
\end{equation}
Using (\ref{ee-m}) and (\ref{eee2})-(\ref{eee4}), we have
\begin{equation}\label{eee5}
|b(\bv,\varepsilon_h)|=|a(\be_h,\bv)+\ell_{\bu}(\bv)+\ell_p(\bv)|\le Ch(\|\bu\|_2+\|p\|_1)\3bar\bv\3bar.
\end{equation}

It follows from (\ref{inf-sup}) and (\ref{eee5})
\[
\|Q_0p-p_h\|\le Ch(\|\bu\|_{2}+\|p\|_1).
\]
The estimate (\ref{error2}) can be derived by using the standard duality argument. The main goal  of this paper is about introducing a new method and how to  implement it. We skip the proof of (\ref{error2}).
\end{proof}

\section{Discrete Divergence Free Basis}\label{section-divfree}

The finite element formulations (\ref{wg-m})-(\ref{wg-c}) lead to a large saddle point system (\ref{matrix})
for which most existing numerical solvers are less effective and robust than for definite
systems. Such a saddle-point system can be reduced to a definite problem
for the velocity unknown defined in a divergence-free subspace $D_h$ of $V_h^0$,
\begin{equation}\label{D_h}
D_h=\{\bv\in V_h;\;b(\bv,q) =0,\quad\forall q\in W_h\}.
\end{equation}

By taking the test function from $D_h$,
the discrete formulation (\ref{wg-m})-(\ref{wg-c}) is equivalent to the following  divergence-free finite element scheme:

\medskip

\begin{algorithm}\label{algorithm2}
A discrete divergence free approximation for (\ref{moment})-(\ref{bc}) with homogeneous boundary condition  is to find
$\bu_h=\{\bu_0,\bu_b\}\in D_h$
\begin{eqnarray}
a(\bu_h,\ \bv)&=&(f,\;\bv_0),\quad \forall \bv=\{\bv_0,\bv_b\}\in D_h.\label{df-wg}
\end{eqnarray}
\end{algorithm}

The system (\ref{df-wg}) is symmetric and positive definite with much fewer unknowns. To implement Algorithm \ref{algorithm2}, we need to construct basis functions for the divergence free subspace $D_h$. There are three types of functions in $V_h$ that are  divergence free.

\medskip

{\bf Type 1}. All $\Phi_{i,j}$ defined in (\ref{b1}) are divergence free. This can be verified easily. Since all the functions $\Phi_{i,j}$ defined in (\ref{b1}) take zero value on $\pT$ for all $T\in\T_h$, it follows from (\ref{cwd}) that $\nabla_w\cdot\Phi_{i,j}|_T=\frac1{|T|}\int_\pT\Phi_{i,j,b}\cdot\bn ds=0$.

{\bf Type 2}. For any $e_i\in\E_h^0$, let $\bn_{e_i}$ and $\bt_{e_i}$ be a normal vector and a tangential vector to $e_i$ respectively. Define  $\Upsilon_i=C_1\Psi_{i,1}+C_2\Psi_{i,2}$ such that $\Upsilon_i|_{e_i}=\bt_{e_i}$. Thus $\Upsilon_i$ is only nonzero on $e_i$. It is easy to see that  $\nabla_w\cdot\Upsilon_i|_T=\frac1{|T|}\int_\pT\Upsilon_{i,b}\cdot\bn ds=0$.

{\bf Type 3}. For a given interior vertex $P_i\in {\cal V}_h$, there are $r$ elements having $P_i$ as a vertex which form a hull ${\cal H}_{P_i}$ as shown in Figure \ref{fig1}. Then there are $r$ interior edges $e_j$ ($j=1,\cdots, r$) associated with ${\cal H}_{P_i}$. Let $\bn_{{e_j}}$ be a normal vector on $e_j$ such that normal vectors $\bn_{e_j}$ $j=1,\cdots,r$ are counterclockwise around vertex $P_i$ as shown in Figure \ref{fig1}.  For each $e_j$, let $\Psi_{j,1}$ and $\Psi_{j,2}$ be the two basis functions of $V_h$ which is only nonzero on $e_j$. Define $\Theta_j=C_1\Psi_{j,1}+C_2\Psi_{j,2}\in V_h$ such that $\Theta_j|_{e_j}=\bn_{e_j}$. Define $\Lambda_i=\sum_{j=1}^r \frac{1}{|e_j|}\Theta_j$. It can be shown that $\nabla_w\cdot\Lambda_i=0$ (see \cite{mwy-divfree} for the details)

\begin{figure}
\begin{center}
\includegraphics[width=4cm]{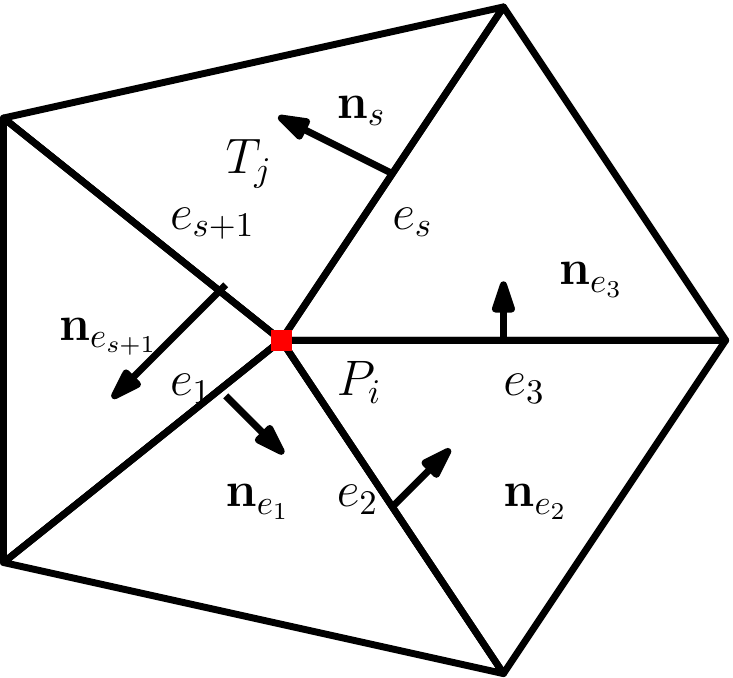}
\caption{Hull ${\cal H}_{P_i}$ .}  \label{fig1}
\end{center}
\end{figure}

\medskip

\begin{lemma}
These three types of divergence free functions form a  basis for $D_h$, i.e.
\begin{equation}\label{b5}
D_h={\rm span}\{\Phi_{i,j},i=1,\cdots,N_T,j=1,2;\Upsilon_i, i=1,\cdots, N_E; \Lambda_i, i=1,\cdots, N_V\}.
\end{equation}
\end{lemma}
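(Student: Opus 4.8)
The plan is to show two things: (i) the listed functions all lie in $D_h$, and (ii) they span $D_h$ and are linearly independent, so that their count matches $\dim D_h$.

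First I would verify membership. Types 1, 2, and 3 are already argued in the text to satisfy $\nabla_w\cdot(\,\cdot\,)=0$ on every $T$, hence each of $\Phi_{i,j}$, $\Upsilon_i$, $\Lambda_i$ belongs to $D_h$ by the definition (\ref{D_h}); for the Type 3 functions one invokes the computation cited from \cite{mwy-divfree}. I should also note $\Phi_{i,j}\in V_h^0$ automatically, and $\Upsilon_i$, $\Lambda_i$ are supported on interior edges only, so they too lie in $V_h^0$. (Strictly, $D_h\subset V_h$ as written, but the relevant divergence-free space for Algorithm \ref{algorithm2} is inside $V_h^0$; the argument is the same.)

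The core is a dimension count. On one side, $a(\bu_h,q)$ aside, $\dim D_h = \dim V_h^0 - \mathrm{rank}\,(b(\cdot,\cdot)\text{ on }V_h^0\times W_h)$. By the inf-sup condition (\ref{inf-sup}), the bilinear form $b$ is surjective onto $W_h$, so its rank is $\dim W_h = N_T - 1$. Also $\dim V_h^0 = 2N_E$ (two components per interior edge; the interior bubbles $\Phi_{i,j}$ contribute $2N_T$ — wait, one must be careful: $V_h^0$ has unknowns $\bv_0$ on every $T$ and $\bv_b$ on every interior edge, so $\dim V_h^0 = 2N_T + 2N_E$). Hence $\dim D_h = 2N_T + 2N_E - (N_T-1) = N_T + 2N_E + 1$. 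On the other side, the proposed spanning set has $2N_T$ functions of Type 1, $N_E$ of Type 2, and $N_V$ of Type 3, for a total $2N_T + N_E + N_V$. Equality of the two counts is then exactly Euler's formula for the triangulation: for a triangulation of a simply connected planar domain, $N_V^{\text{all}} - N_E^{\text{all}} + N_T = 1$, and separating boundary from interior entities gives a relation equivalent to $N_V = N_E - N_T + 1$ (each Type 3 hull around an interior vertex, together with the Type 1 and Type 2 counts, reproduces $N_T + 2N_E + 1$). I would state this topological identity cleanly and check it reduces both sides to the same number.

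Having matched dimensions, it remains to prove linear independence of the union of the three families (equivalently, that they span). The Type 1 functions are trivially independent among themselves and independent of Types 2–3 since only Type 1 functions have nonzero interior component $\bv_0$ on triangles in a way not forced by edge values — more precisely, projecting any linear combination onto its $\bv_0$-in-$T$ minus edge-average part isolates the $\Phi_{i,j}$ coefficients. For the edge-supported part, I would look at edge degrees of freedom: $\Upsilon_i$ is supported on the single edge $e_i$ and equals $\bt_{e_i}$ there, while $\Lambda_i$ is a sum over the $r$ edges of the hull $\mathcal H_{P_i}$ with value a normal direction $\bn_{e_j}$ scaled by $1/|e_j|$ on each. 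Restricting a vanishing combination to any edge $e$ and decomposing into tangential and normal components: the tangential part picks out the $\Upsilon$-coefficient of $e$ directly, while the normal part on each edge is a sum of $1/|e|$ times the $\Lambda$-coefficients of the (at most two) hulls adjacent to $e$; setting all these to zero gives a graph-incidence linear system on the interior vertices. I expect the main obstacle to be this last step — showing that incidence system has only the trivial solution — which I would handle either by the standard argument that the "curl-type" functions $\Lambda_i$ are independent because they correspond to a spanning tree / cycle-space argument on the dual graph (as in the reference \cite{mwy-divfree}), or, more cheaply, simply by appeal to the already-established dimension count: once we know the $2N_T + N_E + N_V$ functions lie in $D_h$ and $\dim D_h = 2N_T + N_E + N_V$, it suffices to prove either spanning or independence, and independence is the more tractable of the two via the component-isolation argument above combined with the tree argument for the normal components. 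I would therefore present independence as the substantive content and invoke \cite{mwy-divfree} for the combinatorial lemma on the $\Lambda_i$.
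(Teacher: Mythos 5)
Your proposal is correct and follows essentially the same route as the paper: show the three families lie in $D_h$, count them as $2N_T+N_E+N_V$, compute $\dim D_h=\dim V_h-\dim W_h=N_T+2N_E+1$, and match the two via Euler's formula $N_E+1=N_V+N_T$. You actually supply two justifications the paper glosses over --- the surjectivity of $b$ (via the inf-sup condition) needed for the dimension formula, and the tangential/normal edge-decomposition plus incidence argument behind the linear independence that the paper dismisses as ``easy to check'' --- so your write-up is a more complete version of the same proof.
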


\begin{proof}
It is easy to check that all the weakly divergence free functions in (\ref{b5}) are linearly independent. It is left to check that the number of the basis functions in (\ref{b5}) is equal to the dimension of $D_h$.   Obviously, the number of the functions in (\ref{b5}) is $2N_T+N_E+N_V$. On the other hand we have
$$\rm{dim} (D_h)=\rm{dim} (V_h)-\rm{dim}(W_h)=2N_T+2N_E-N_T+1.$$
For $\T_h$, it is well known as Euler formula  that
\begin{equation}\label{key}
N_E+1=N_V+N_K.
\end{equation}
Using (\ref{key}), we have
$$\rm{dim} (D_h)=\rm{dim} (V_h)-\rm{dim}(W_h)=2N_T+2N_E-N_T+1=2N_T+N_E+N_V.$$
We have proved the lemma.
\end{proof}

\section{Numerical Examples}\label{section-ne}

In this section, six numerical examples are tested  for
the two dimensional Stokes equations (\ref{moment})-(\ref{bc}).  The numerical experiments indicate that the weak Galerkin methods are robust, accurate and easy to implement.

\subsection{Example 1}\label{Num_ex1}
We first consider the Stokes equations with homogeneous boundary condition defined on a square $(0, 1)\times (0, 1)$.
The exact solutions are given by
$$
\bu = \begin{pmatrix}2\pi\sin(\pi x)\sin(\pi x)\cos(\pi y)\sin(\pi y) \\
-2\pi\sin(\pi x)\sin(\pi y)\cos(\pi x)\sin(\pi y)
\end{pmatrix},
$$
and
\[
p=\cos(\pi x)\cos(\pi y).
\]
The uniform triangular mesh is used for testing. Denote mesh size by $h.$ The numerical results of Algorithm 1 are presented in Table \ref{ex1_1}. These results show the $O(h)$ error of the velocity in the $H^1-$norm and pressure in the $L^2-$norm as predicted by Theorem \ref{h1-bd}. Convergence rate of $O(h^2)$ for velocity in the $L^2-$norm is observed.

Furthermore, the divergence free weak Galerkin Algorithm 2 is tested for this example. The weakly divergence-free subspace $D_h$ is constructed as described in previous section. By using the basis functions in $D_h$, the saddle-point system (\ref{wg-m})-(\ref{wg-c}) is reduced to a definite system (\ref{df-wg}) only depending on velocity unknowns. The numerical performance of velocity measured in $H^1-$norm and $L^2-$norm is shown in Table \ref{ex1_2}.

\begin{table}
  \caption{Example \ref{Num_ex1}. Numerical results of Algorithm 1.}
  \label{ex1_1}
  \center
   \begin{tabular}{||c||ccc||}
    \hline\hline
    $h$ & $\3bar \bu_h-\bQ_{h}\bu\3bar$ & $\|\bu_0-\bQ_{0}\bu\|$ & $||Q_0p-p_h||$ \\
    \hline\hline
  1/4   &4.0478     &3.7181e-1  &1.7906    \\ \hline
  1/8   &1.8723     &9.8624e-2  &8.7513e-1 \\ \hline
  1/16  &9.1907e-1  &2.5276e-2  &4.1211e-1 \\ \hline
  1/32  &4.5785e-1  &6.3793e-3  &2.0019e-1 \\ \hline
  1/64  &2.2874e-1  &1.5992e-3  &9.9207e-2 \\ \hline
  1/128 &1.1435e-1  &4.0009e-4  &4.9486e-2 \\ \hline\hline
   $O(h^r),r=$ &  1.0238  &   1.9750 &  1.0386   \\ \hline\hline
   \end{tabular}
\end{table}

\begin{table}
  \caption{Example \ref{Num_ex1}.  Numerical results of Algorithm 2.}
  \label{ex1_2}
  \center
   \begin{tabular}{||c||cc||}
    \hline\hline
    $h$ &$\3bar\bu_h-\bQ_{h}\bu\3bar$ & $\|\bu_0-\bQ_{0}\bu\|$ \\
    \hline\hline
  1/4   &6.3120    &2.6300e-1 \\ \hline
  1/8   &3.3499    &6.9789e-2 \\ \hline
  1/16  &1.7174    &1.7890e-2 \\ \hline
  1/32  &8.6696e-1 &4.5154e-3 \\ \hline
  1/64  &4.3468e-1 &1.1320e-3 \\ \hline
  1/128 &2.1750e-1 &2.8320e-4 \\ \hline\hline
  $O(h^r),r=$ &9.7484e-01 &1.9748 \\ \hline\hline
   \end{tabular}
\end{table}

\subsection{Example 2}\label{Num_ex2}
The purpose of this example is to test the robustness and accuracy of this WG method for handling  non-homogeneous boundary condition and complicated geometry.

Consider the  Stokes equations with non-homogeneous boundary condition that have the exact solutions
$$
\bu = \begin{pmatrix}x+x^2-2xy+x^3-3xy^2+x^2y \\
-y-2xy+y^2-3x^2y+y^3-xy^2
\end{pmatrix},
$$
and
\[
p=xy+x+y+x^3y^2-4/3.
\]

In this example, domain $\Omega$ is derived from a square $(0,1)\times(0,1)$ by taking out three circles  centered at $(0.5,0.5),\ (0.2,0.8),\ (0.8,0.8) $ with radius $0.1$.
We start the weak Galerkin simulation on the initial mesh as shown in Figure \ref{IniMesh_ex2}. Then each refinement is obtained by dividing each triangle into four congruent triangles.

Table \ref{tab:inhomogeneous0} displays the errors and convergence rate of the numerical solution of Algorithm 1. Optimal order convergence rates for velocity and pressure are observed in corresponding norms.

\begin{table}
  \caption{Example \ref{Num_ex2}.  Numerical results of Algorithm 1.}
  \label{tab:inhomogeneous0}
  \center
  \begin{tabular}{||c||ccc||}
    \hline\hline
    $h$ & $\3bar \bu_h-\bQ_{h}\bu\3bar$ & $\|\bu_0-\bQ_{0}\bu\|$ & $||Q_0p-p_h||$ \\
    \hline\hline
  Level 1  &1.5123e-1  &6.5055e-3  &2.2512e-1\\ \hline
  Level 2  &7.6271e-2  &1.7736e-3  &9.6785e-2\\ \hline
  Level 3  &3.8276e-2  &4.6167e-4  &4.0673e-2\\ \hline
  Level 4  &1.9168e-2  &1.1707e-4  &2.3700e-2\\ \hline
  Level 5  &9.5900e-3  &2.9394e-5  &1.9385e-2\\ \hline\hline
   $O(h^r),r=$ & 9.9506e-1  &  1.9501    & 9.1053e-1\\ \hline\hline
   \end{tabular}
\end{table}

\begin{figure}[!tb]
\centering
\begin{tabular}{c}
  \resizebox{2.45in}{2.1in}{\includegraphics{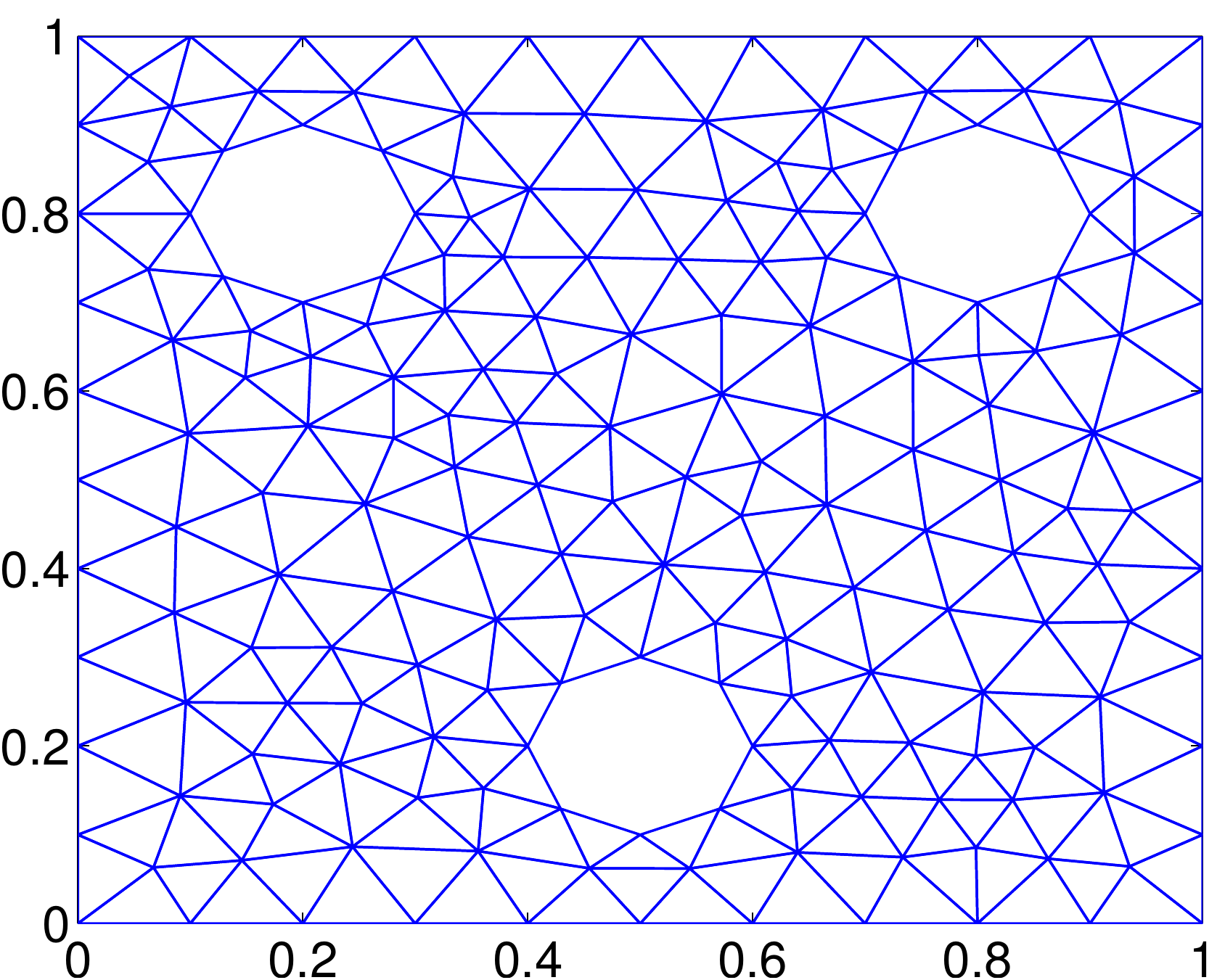}}
\end{tabular}
\caption{Example \ref{Num_ex2}: Initial mesh.}
\label{IniMesh_ex2}
\end{figure}

\subsection{Example 3}\label{Num_ex3}
Let $(\bu,p)$ as follows
\begin{eqnarray*}
\bu = \begin{pmatrix}1-e^{\lambda x}\cos(2\pi y) \\
\frac{\lambda}{2\pi}e^{\lambda x}\sin(2\pi y)
\end{pmatrix},
p=\frac{1}{2}e^{2\lambda x}+C,
\end{eqnarray*}
be the exact solution of the Stokes equations,
\begin{eqnarray}
-\frac{1}{\mathrm {Re}}\Delta\bu+\nabla p=\bbf,\quad \nabla\cdot\bu=0\mbox{  in }\Omega,
\end{eqnarray}
where $\lambda=\mathrm {Re}/2-\sqrt{\mathrm {Re}^2/4+4\pi^2}$ and $\mathrm {Re}$ is the Reynolds number.
Let $\Omega=(-1/2,3/2)\times(0,2).$  The Dirichlet boundary condition for velocity is considered in this example.

In Table \ref{ex3}, we demonstrate the error profiles and convergence rates of the numerical solution of Algorithm 1 with $\mathrm {Re}=1,\ 10,\ 100,\ 1000.$ The streamlines of velocity and color contour of pressure for $\mathrm {Re}=1,\ 10,\ 100,\ 1000$ are plotted in Figure \ref{fig:ex3}.
\begin{table}
  \caption{Example \ref{Num_ex3}:  Numerical results of Algorithm 1.}\label{ex3}
  \center
   \begin{tabular}{|c|cc|cc|cc|}
    \hline\hline
    $h$ & $\3bar\bu_h-\bQ_{h}\bu\3bar$ & order& $\|\bu_0-\bQ_{0}\bu\|$ & order &$||Q_0p-p_h||$ & order\\
    \hline
    \multicolumn{7}{|c|}{$\mathrm {Re}=1$}\\ \hline
   1/8   &4.2375e+1 &          &4.2372    &       &2.9223e+1&      \\
   1/16  &2.4722e+1 &7.7742e-1 &1.3963    &1.6015 &1.2713e+1&1.2008\\
   1/32  &1.3100e+1 &9.1623e-1 &3.9686e-1 &1.8149 &5.2018   &1.2892\\
   1/64  &6.6667e   &9.7452e-1 &1.0374e-1 &1.9357 &2.3142   &1.1685\\
   1/128 &3.3504e   &9.9264e-1 &2.6294e-2 &1.9802 &1.1550   &1.0026\\
    \hline
    \multicolumn{7}{|c|}{$\mathrm {Re}=10$}\\ \hline
   1/8   &6.0606    &          &2.0457    &       &7.6173e-1 &      \\
   1/16  &3.2851    &8.8352e-1 &5.9724e-1 &1.7762 &2.9472e-1 &1.3699\\
   1/32  &1.6896    &9.5926e-1 &1.5926e-1 &1.9069 &1.1379e-1 &1.3730\\
   1/64  &8.5296e-1 &9.8613e-1 &4.0832e-2 &1.9636 &4.5851e-2 &1.3113\\
   1/128 &4.2787e-1 &9.9531e-1 &1.0306e-2 &1.9862 &1.9770e-2 &1.2136\\
   \hline
    \multicolumn{7}{|c|}{$\mathrm {Re}=100$}\\ \hline
   1/8   &5.5209e-1 &          &6.7127e-1 &       &1.5818e-2 &      \\
   1/16  &2.7981e-1 &9.8046e-1 &1.7946e-1 &1.9032 &6.7914e-3 &1.2198\\
   1/32  &1.4063e-1 &9.9254e-1 &4.5955e-2 &1.9654 &2.9102e-3 &1.2226\\
   1/64  &7.0434e-2 &9.9756e-1 &1.1575e-2 &1.9892 &1.3386e-3 &1.1204\\
   1/128 &3.5235e-2 &9.9926e-1 &2.9001e-3 &1.9968 &6.4795e-4 &1.0468\\
   \hline
    \multicolumn{7}{|c|}{$\mathrm {Re}=1000$}\\ \hline
   1/8   &2.0636e-1 &          &8.1461e-1 &       &1.8694e-3 &      \\
   1/16  &1.0436e-1 &9.8359e-1 &2.1625e-1 &1.9134 &7.6097e-4 &1.2967\\
   1/32  &5.2395e-2 &9.9407e-1 &5.5149e-2 &1.9713 &3.2176e-4 &1.2419\\
   1/64  &2.6230e-2 &9.9821e-1 &1.3868e-2 &1.9916 &1.4850e-4 &1.1155\\
   1/128 &1.3120e-2 &9.9945e-1 &3.4726e-3 &1.9977 &7.2192e-5 &1.0406\\ \hline\hline
   \end{tabular}
\end{table}

\begin{figure}[!tb]
\centering
\begin{tabular}{cc}
  \resizebox{2.45in}{2.1in}{\includegraphics{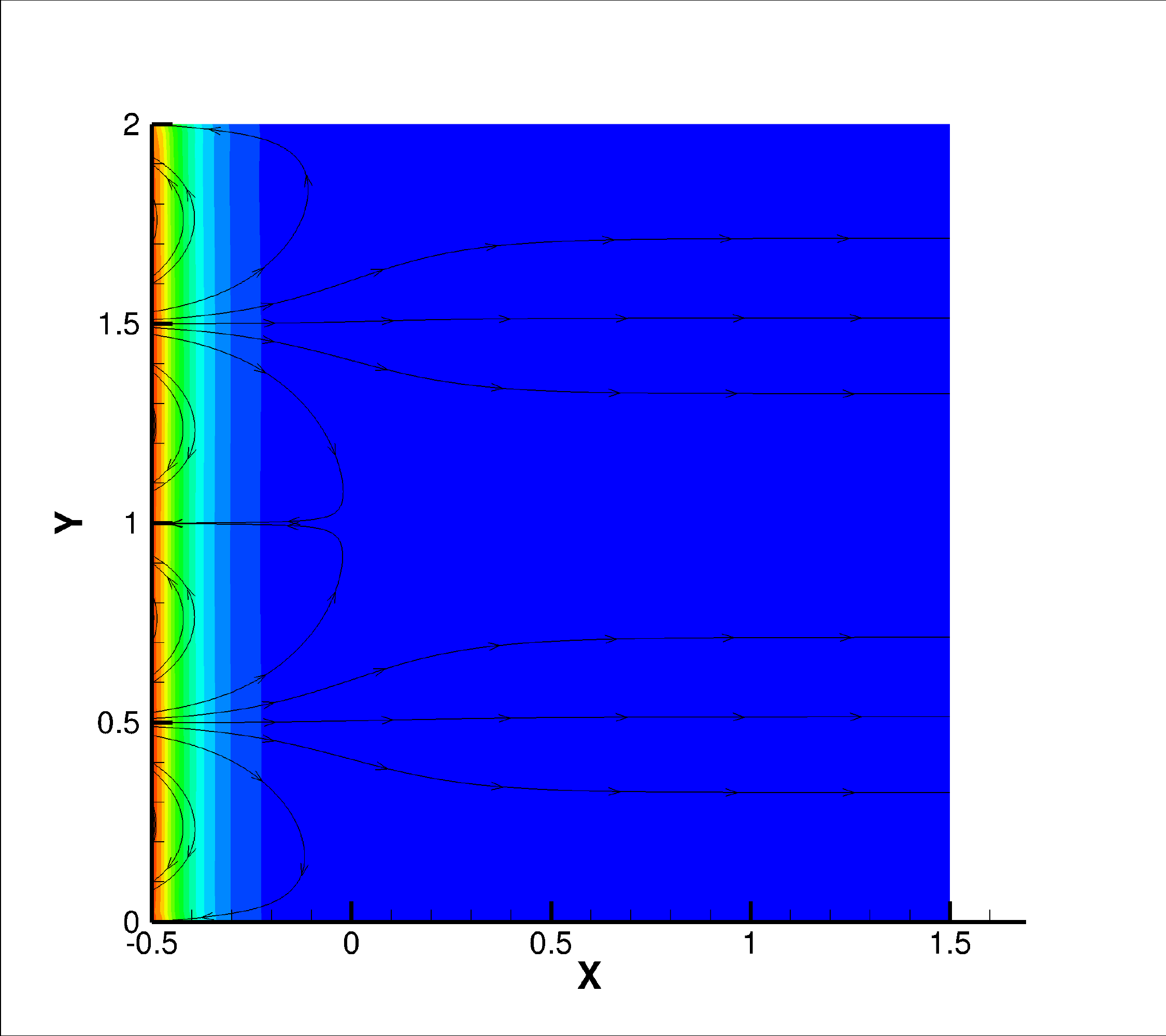}} \quad
  \resizebox{2.45in}{2.1in}{\includegraphics{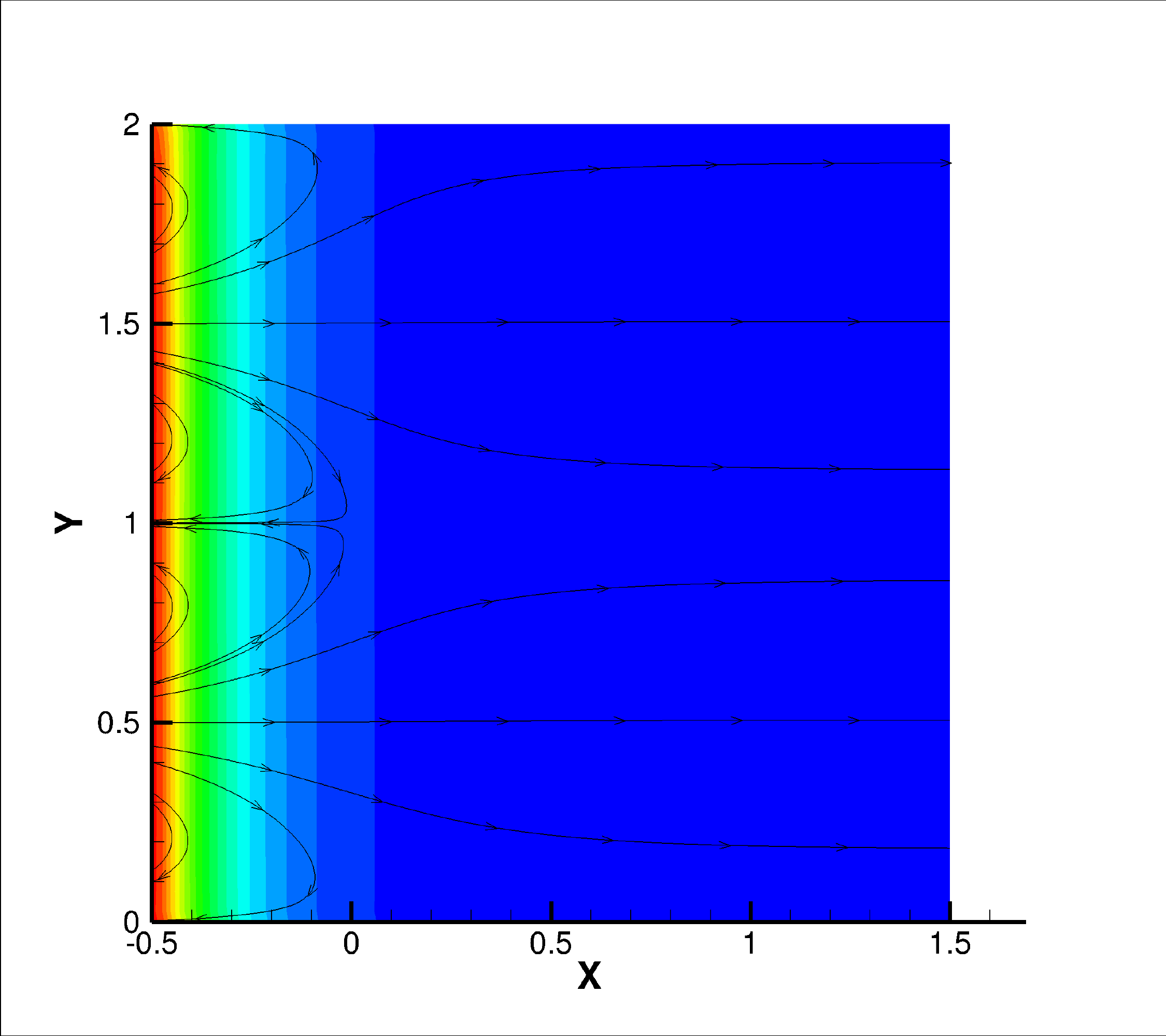}} \\
  \resizebox{2.45in}{2.1in}{\includegraphics{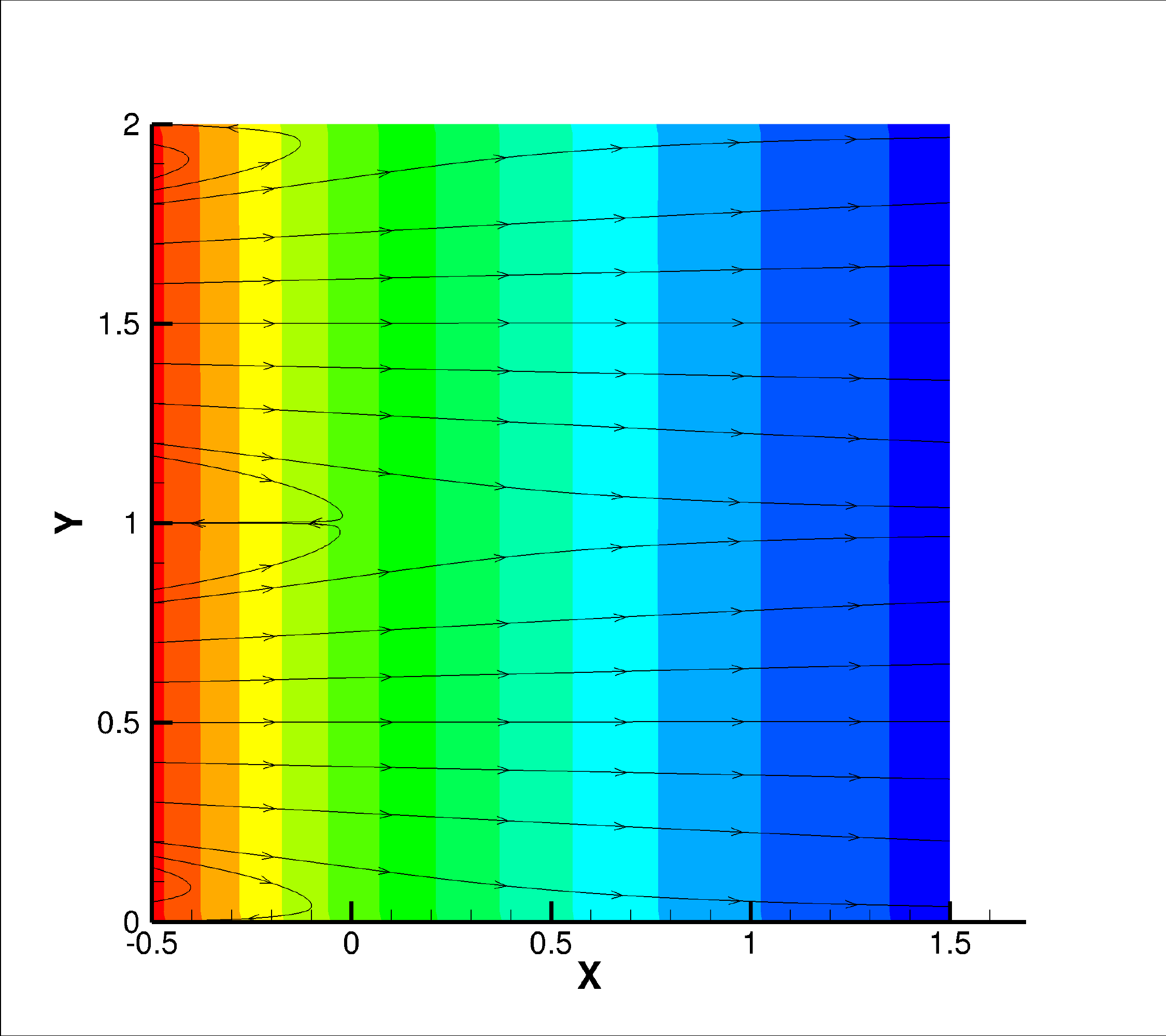}}\quad
  \resizebox{2.45in}{2.1in}{\includegraphics{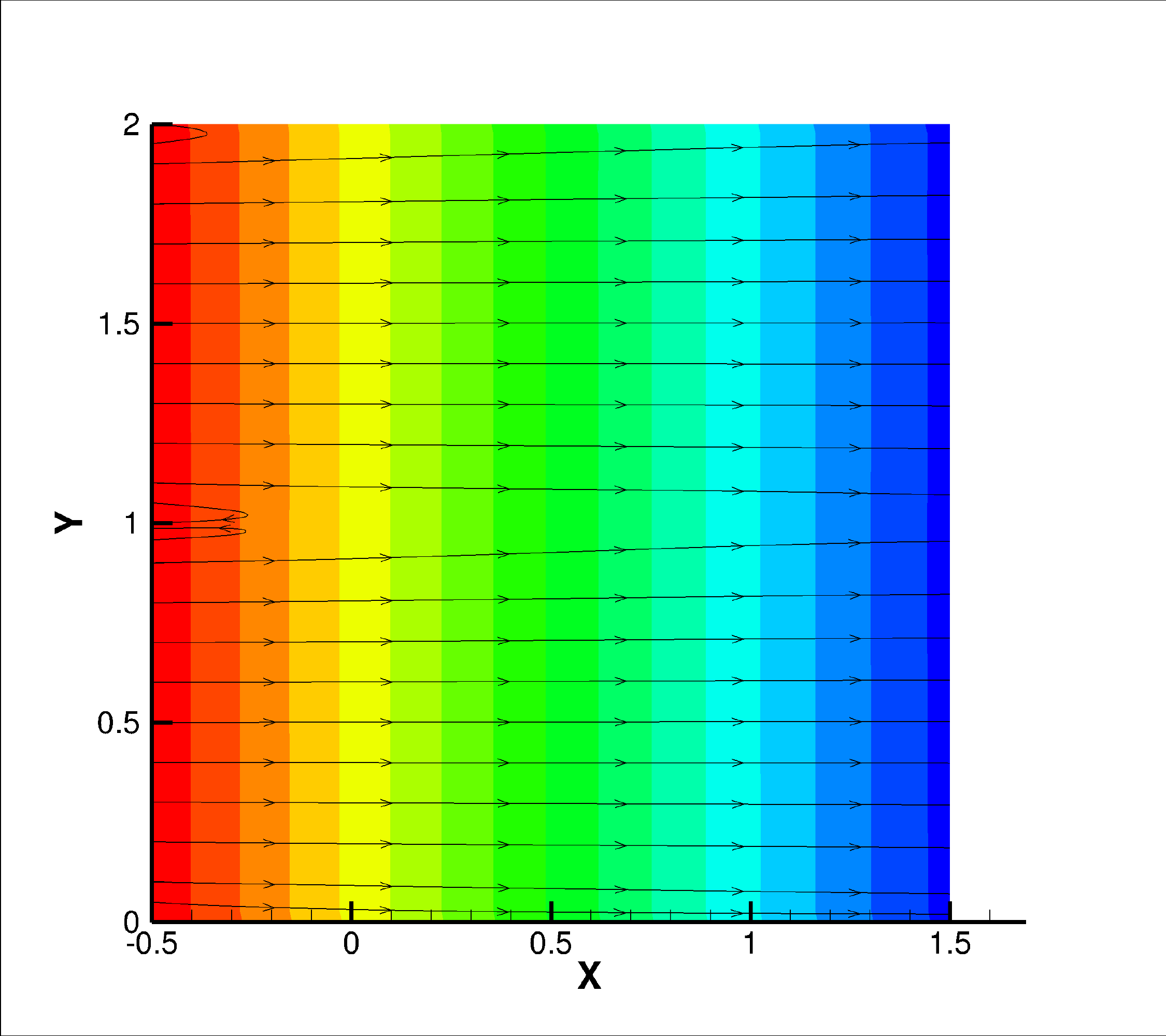}}
\end{tabular}
\caption{Example \ref{Num_ex3} Streamlines of velocity and color contour of pressure for $\mathrm {Re}=1,\ 10,\ 100,\ 1000$ (top left, top right, bottom left, bottom right.)}\label{fig:ex3}
\end{figure}

\subsection{Example 4}\label{Num_ex4}
Two dimensional channel flow around a circular obstacle is simulated in this problem.
We consider the Stokes equations with non-homogeneous boundary condition:
\begin{eqnarray*}
\bu|_{\partial\Omega}={\bf g}=\begin{cases}
(1,0)^t,&\mbox{ if } x=0;\\
(1,0)^t,&\mbox{ if } x=1;\\
(0,0)^t,&\mbox{ else. }
\end{cases}
\end{eqnarray*}
Let $\Omega=(0,1)\times(0,1)$ with one circular hole centered at $(0.5,0.5),$ with  radius 0.1.

We start with initial mesh and then refined the mesh uniformly. Level 1 mesh and level 2 mesh are shown in Figure \ref{IniMesh_Ex4}. The velocity fields of Algorithm 1  on level 1 and  level 2 meshes are shown in Figures \ref{Num_Ex4_1}. The streamlines of velocity and color contour of pressure are plotted in Figure \ref{Num_Ex4_2}.

\begin{figure}[!tb]
\centering
\begin{tabular}{cc}
  \resizebox{2.45in}{2.1in}{\includegraphics{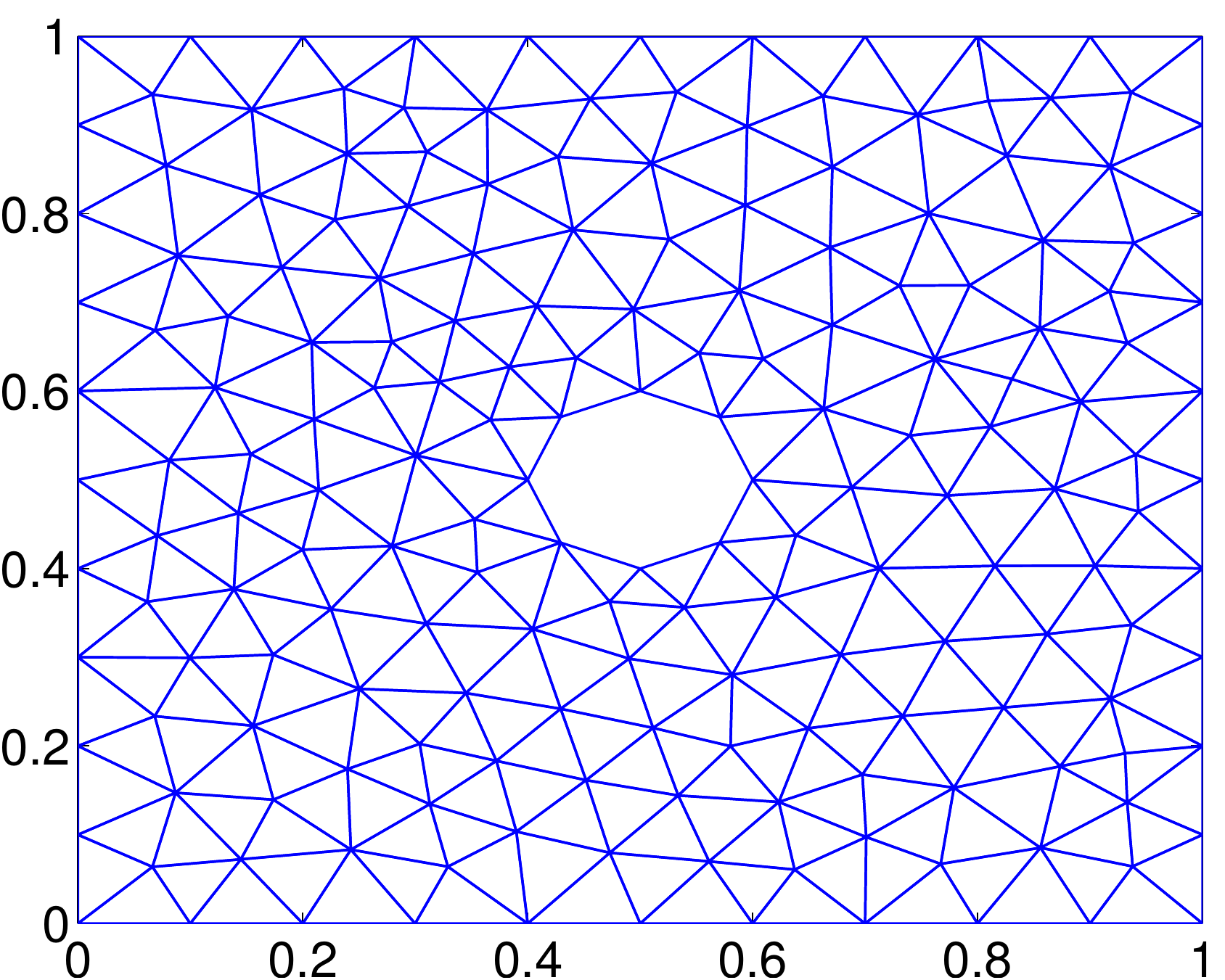}} \quad
  \resizebox{2.45in}{2.1in}{\includegraphics{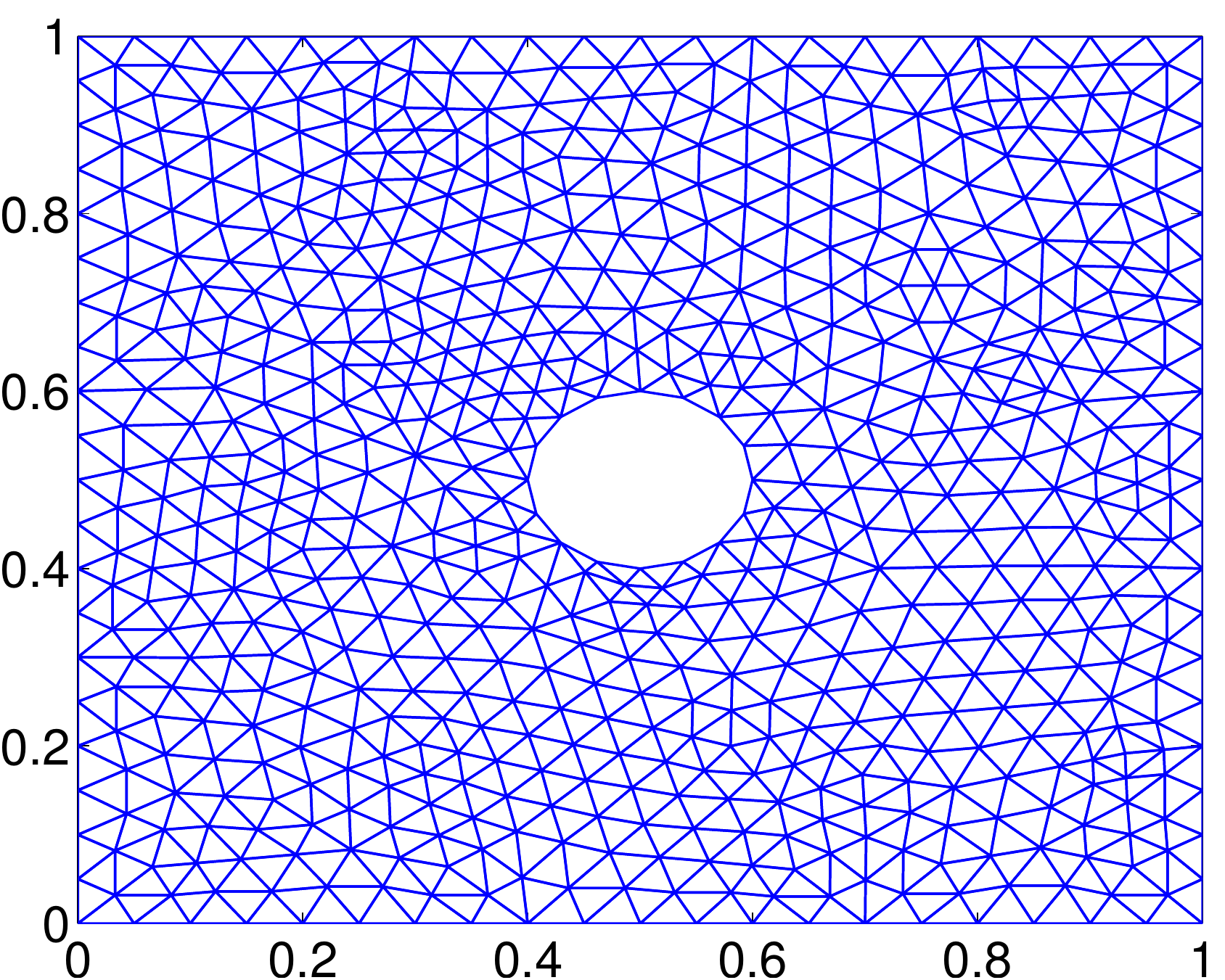}}
\end{tabular}
\caption{Example \ref{Num_ex4} Meshes of Level 1 (Left) and Level 2 (Right)}
\label{IniMesh_Ex4}
\end{figure}

\begin{figure}[!tb]
\centering
\begin{tabular}{cc}
  \resizebox{2.45in}{2.1in}{\includegraphics{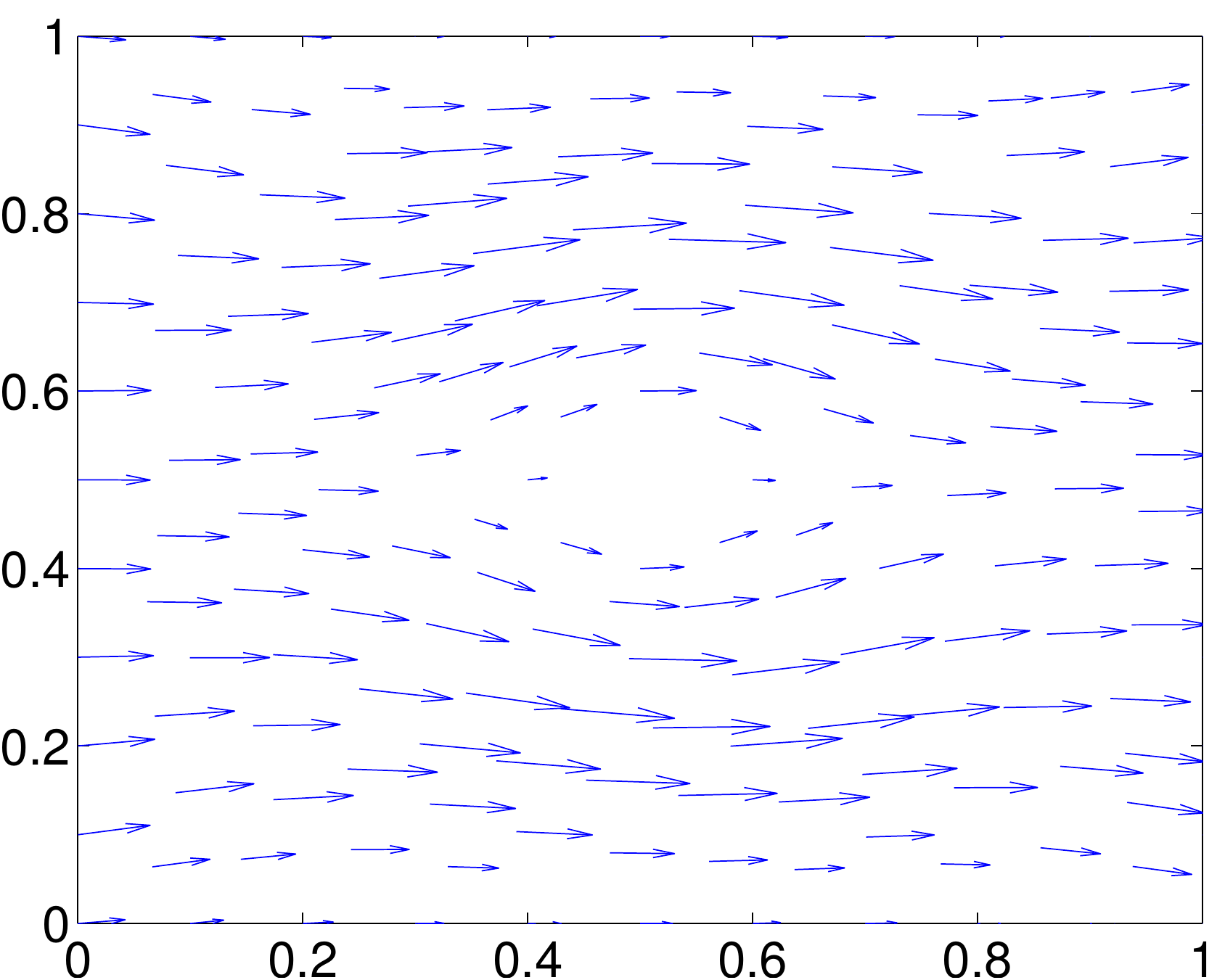}} \quad
  \resizebox{2.45in}{2.1in}{\includegraphics{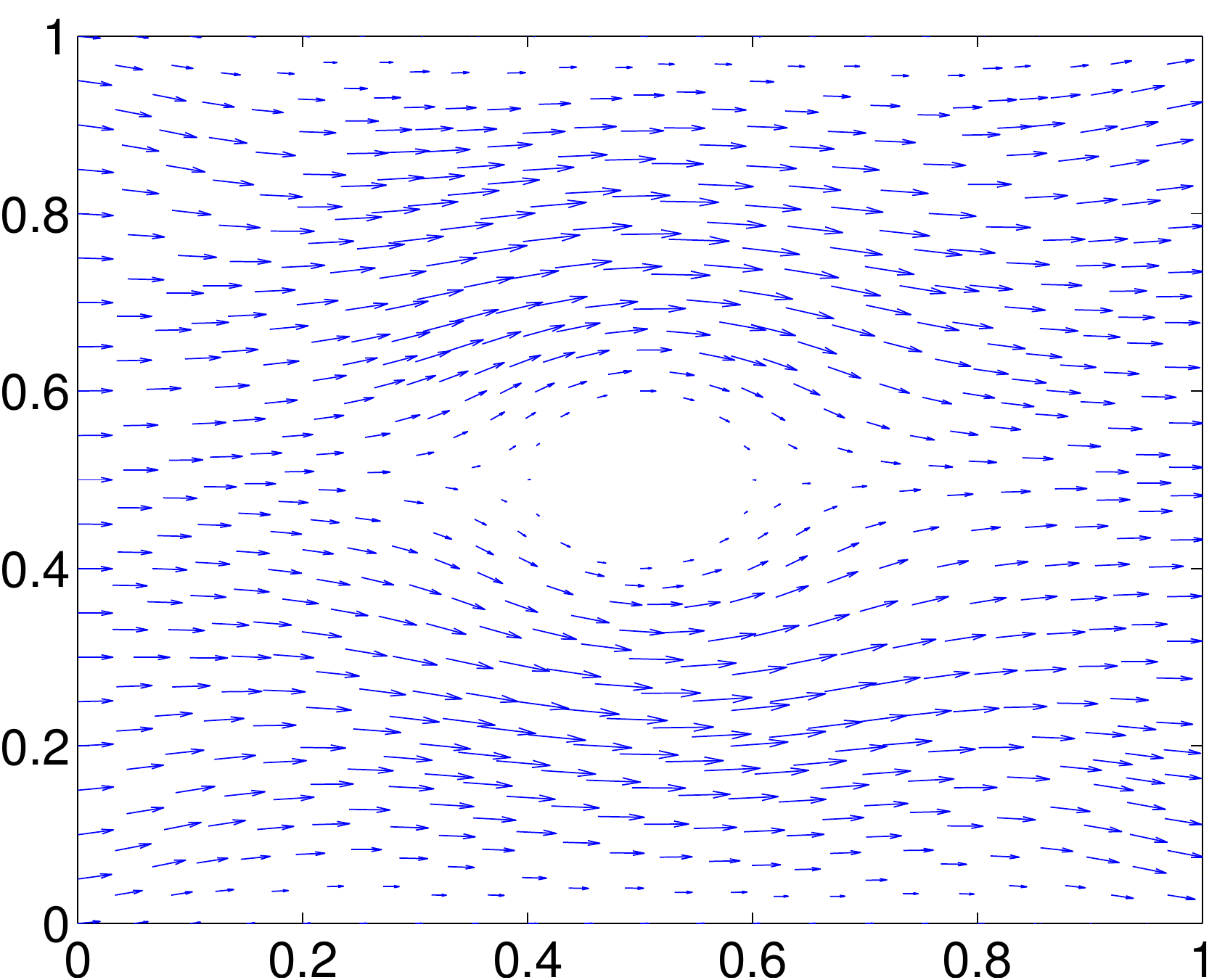}}
\end{tabular}
\caption{Example \ref{Num_ex4}: Vector fields of Velocity on Mesh Level 1 (Left) and Mesh Level 2 (Right)}
\label{Num_Ex4_1}
\end{figure}

\begin{figure}[!tb]
\centering
\begin{tabular}{c}
  \resizebox{2.7in}{2.45in}{\includegraphics{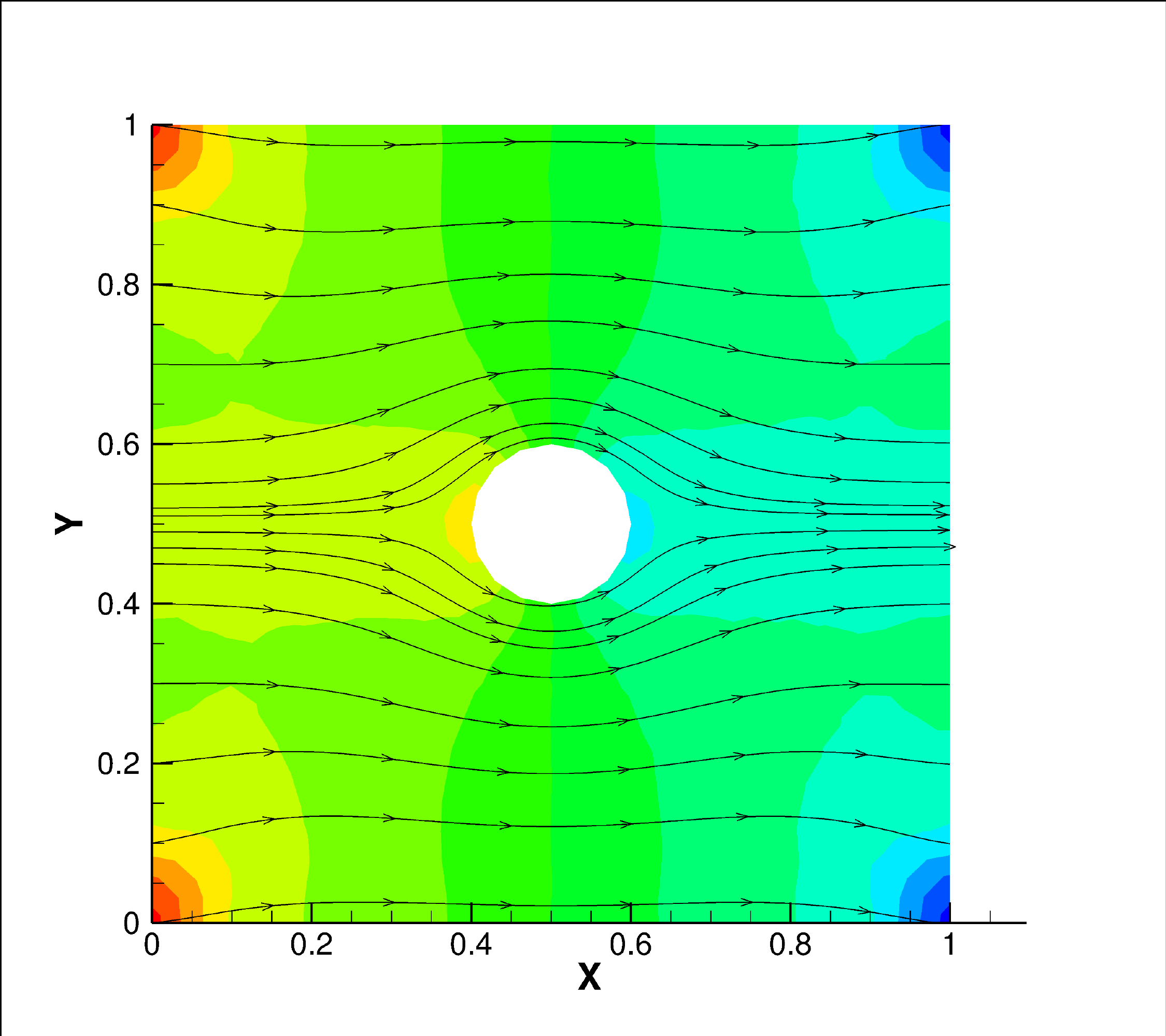}}
\end{tabular}
\caption{Example \ref{Num_ex4}: Streamlines of velocity and color contour of pressure.}
\label{Num_Ex4_2}
\end{figure}

\subsection{Example 5}\label{Num_ex5}
This example is used to test the backward facing step problem. This example is a benchmark problem. Let $\Omega=(-2,8)\times(-1,1)\backslash [-2,0]\times[-1,0]$, consider the Stokes problem with $\bbf=0,$ and Dirichlet boundary condition as:
\begin{eqnarray*}
\bu|_{\partial\Omega}={\bf g}=\begin{cases}
(-y(y-1)/10,0)^t,&\mbox{ if } x=-2;\\
(-(y+1)(y-1)/80,0)^t,&\mbox{ if } x=8;\\
(0,0)^t, &\mbox{else}.
\end{cases}
\end{eqnarray*}

\begin{figure}[!tb]
\centering
\begin{tabular}{c}
  \resizebox{4.45in}{1.1in}{\includegraphics{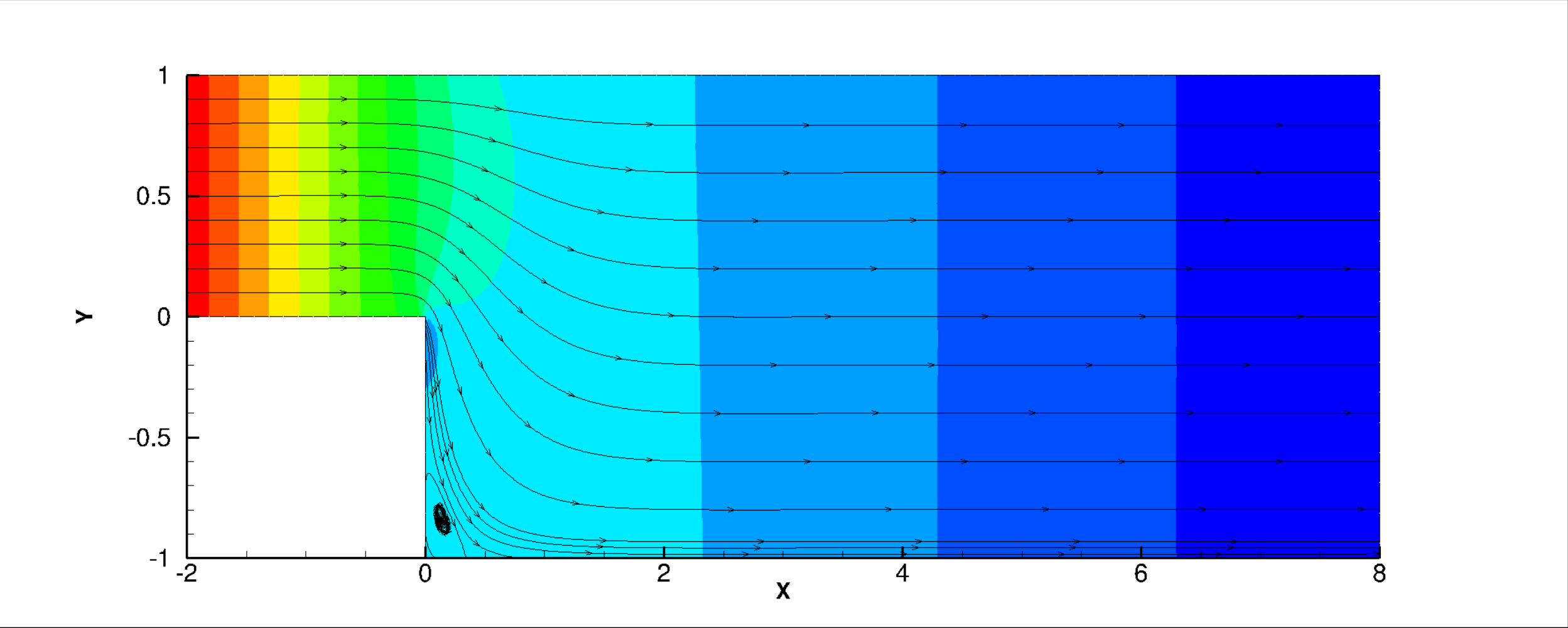}} \\
  \resizebox{4.45in}{1.1in}{\includegraphics{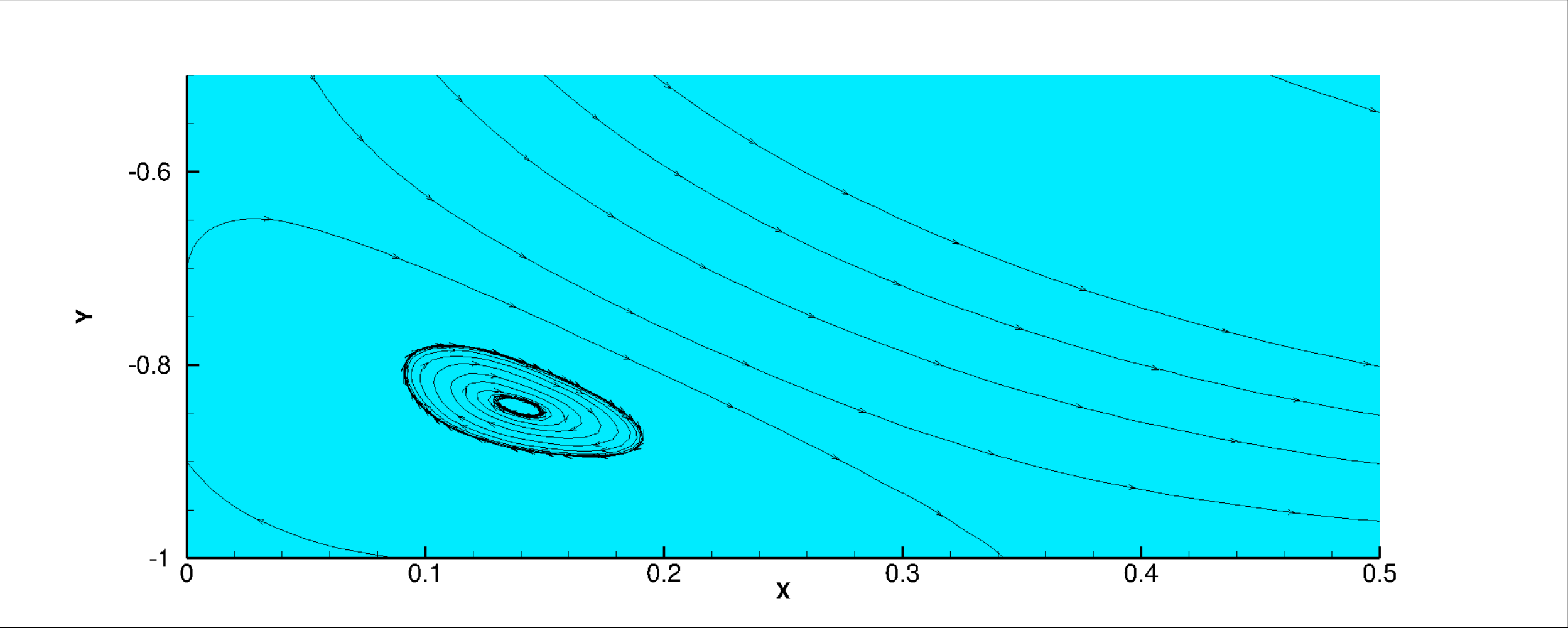}}
\end{tabular}
\caption{Example \ref{Num_ex5}: Streamlines of velocity and color contour of pressure (top); Zoom in plot.}
\label{Num_Ex5_1}
\end{figure}

Figure \ref{Num_Ex5_1} plots the streamlines of velocity and color contour of pressure. The plot shows that the pressure is high on the left and low on the right. A zoom figure of lower left corner $[0,0.5]\times[-1,-0.65]$ is plotted in Figure \ref{Num_Ex5_1}, which shows one eddy.

\subsection{Example 6}\label{Num_ex6}
The lid-driven cavity flow is considered in this example with $\Omega=(0,1)\times(0,1)$,  $\bbf=0$, and the Dirichlet boundary condition as:
\begin{eqnarray*}
\bu|_{\partial\Omega}={\bf g}=\begin{cases}
(1,0)^t,&\mbox{ if } y=1;\\
(0,0)^t, &\mbox{else}.
\end{cases}
\end{eqnarray*}

\begin{figure}[!tb]
\centering
\begin{tabular}{ccc}
  \resizebox{2.45in}{2.1in}{\includegraphics{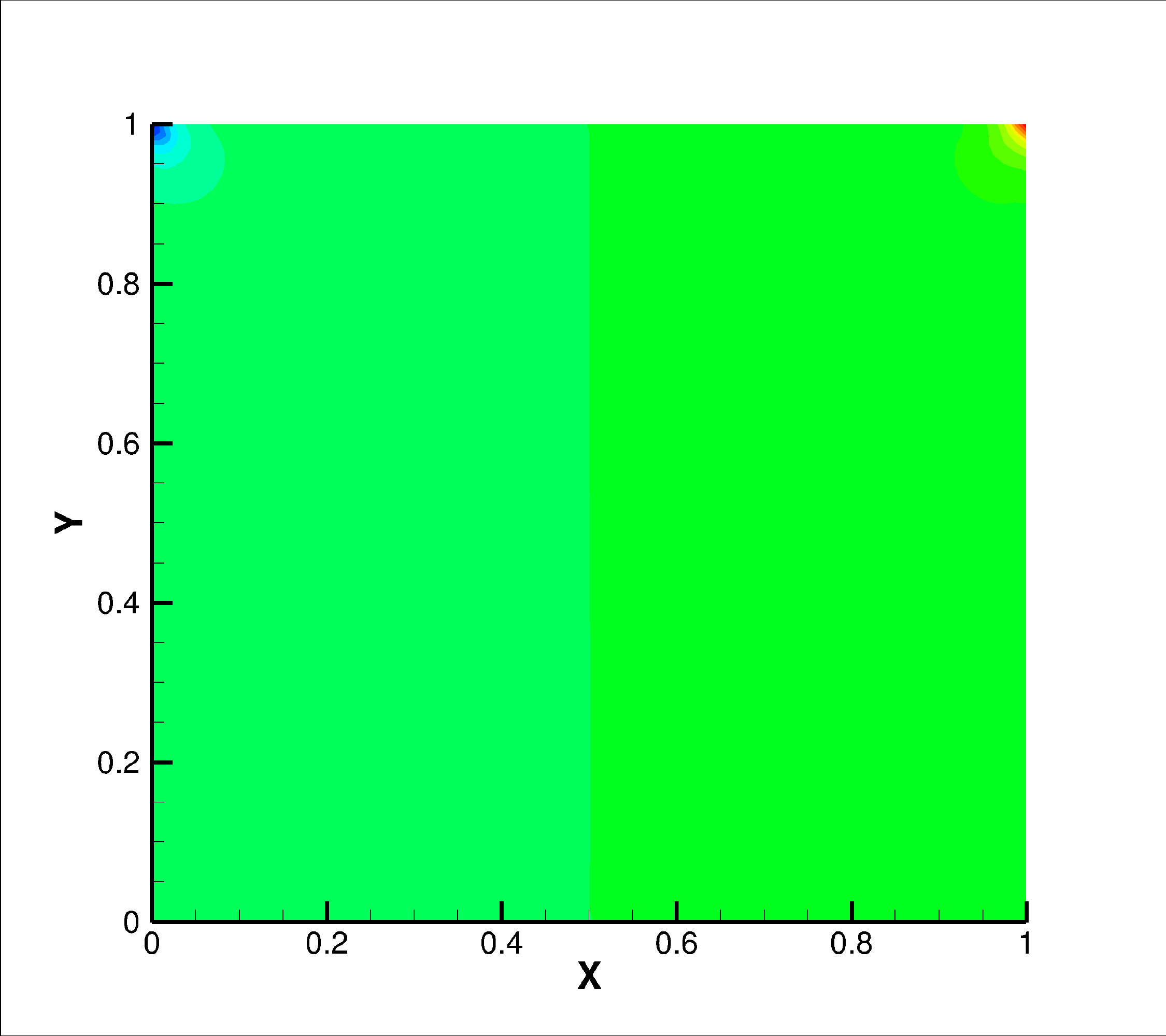}} \quad
  \resizebox{2.45in}{2.1in}{\includegraphics{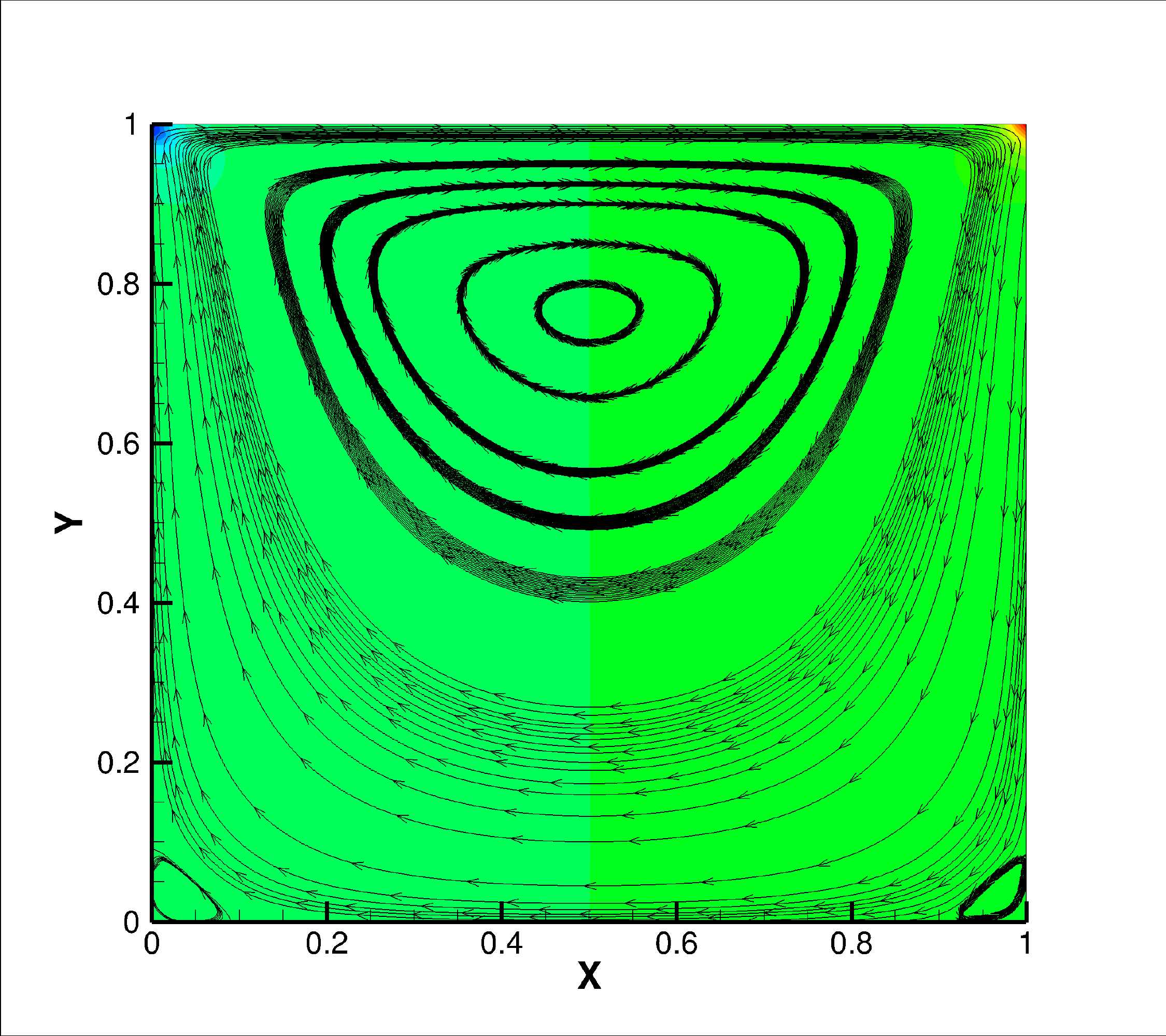}} \\
  \resizebox{2.45in}{2.1in}{\includegraphics{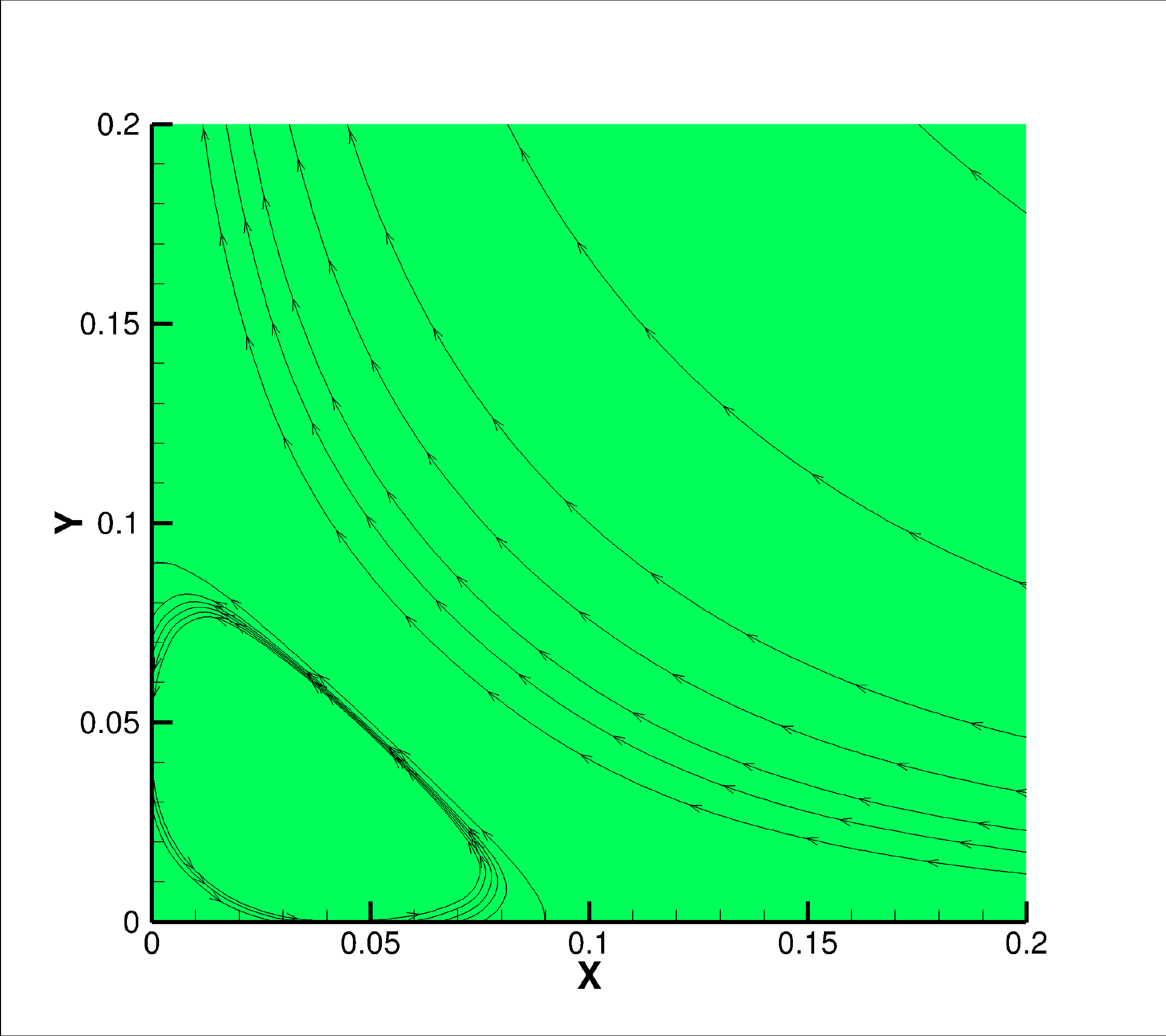}}\quad
  \resizebox{2.45in}{2.1in}{\includegraphics{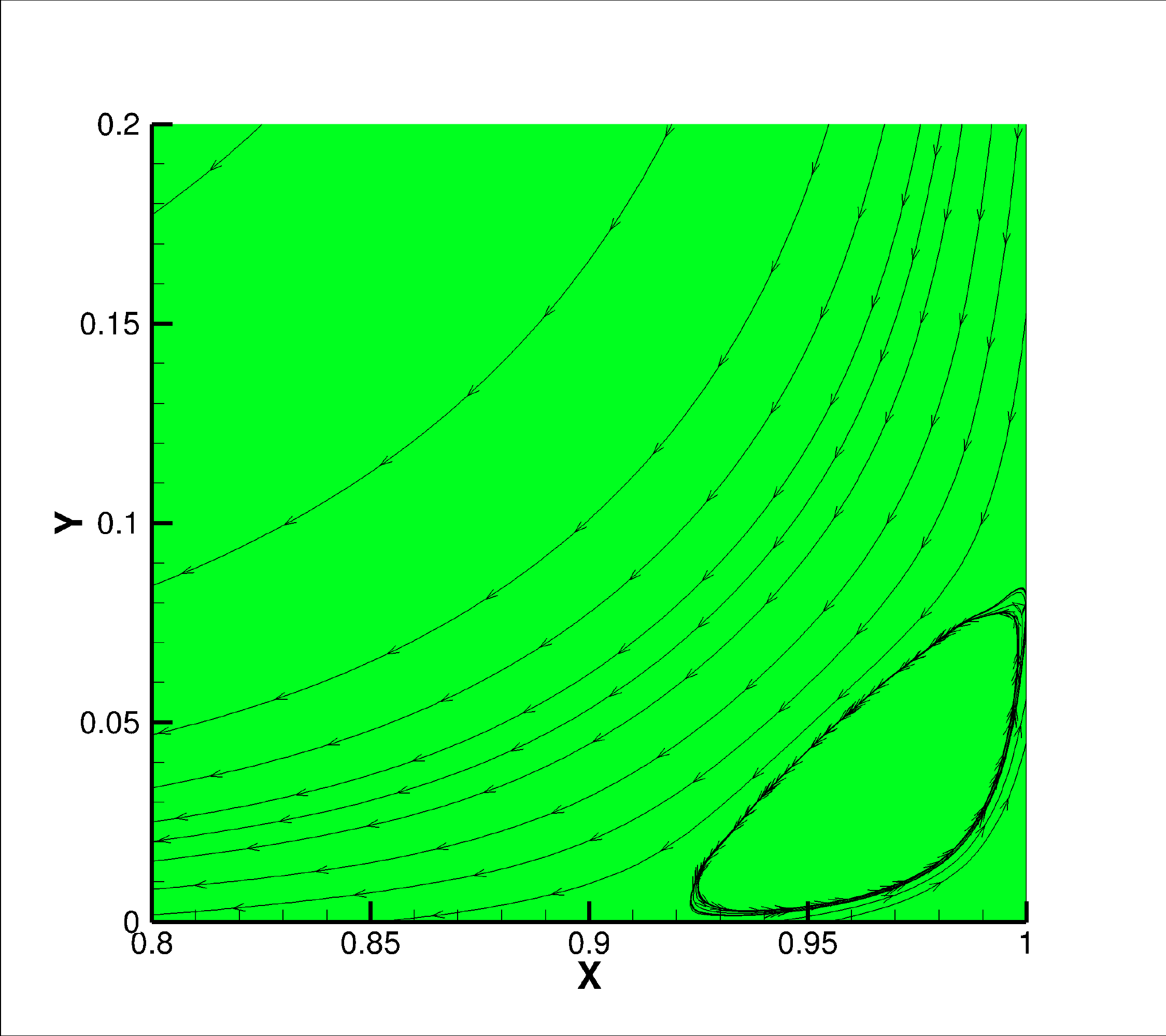}}
\end{tabular}
\caption{Example \ref{Num_ex6}: Color contour of pressure (top left); Streamlines of velocity (top right); Zoom in plot of two bottom corners.}\label{Num_ex6_1}
\end{figure}

Figure \ref{Num_ex6_1} displays the color contour of pressure $p_h$ and streamlines of velocity $\bu_h$ on a uniform mesh.  Two Moffat eddies at the bottom corners are detected in Figure \ref{Num_ex6_1}.

\end{document}